\documentclass[11pt]{amsart}
\usepackage{amsmath,amssymb}
\usepackage{bm}
\usepackage{xcolor}
\usepackage{mathrsfs}
\usepackage{verbatim}
\usepackage{float}
\usepackage{setspace}
\usepackage{amsthm}
\usepackage{geometry}
\usepackage{color}
\usepackage{graphicx}
\usepackage{stmaryrd}
\usepackage{authblk}
\usepackage{etoolbox}
\usepackage{placeins}

\newtheorem{myTheo}{Theorem}[section]
\newtheorem{mylem}{Lemma}[section]

\newtheorem{remark}{Remark}[section]

\theoremstyle{definition}
\newtheorem{definition}{Definition}[section]
  \makeatletter
  \@addtoreset{equation}{section}
  \makeatother

\makeatletter
\let\maketitle\AB@maketitle
\let\authors\@author
\renewcommand{\@setauthors}{%
  \begin{center}
    \@author
  \end{center}
}

\makeatletter
\patchcmd{\abstract}
  {\item[\hskip\labelsep\scshape\abstractname.]}
  {\item[]\noindent
   \makebox[\linewidth][c]{%
     \normalfont\normalsize\bfseries\abstractname}
   \par\medskip}
  {}{}
\makeatother
\begin{document}

\title[Morley element for SGE problem]{Morley element for the linear strain gradient elasticity problem}

\author[a]{Xiao Li}
\author[a,*]{Shudan Tian}
\author[a]{Hua Wang}

\affil[a]{Hunan Key Laboratory for Computation and Simulation in
Science and Engineering, National Center for Applied Mathematics
in Hunan, Xiangtan University, Xiangtan 411105, Hunan, China}


\date{}

\thanks{\textsuperscript{*}Corresponding author.
E-mail: \texttt{shudan.tian@xtu.edu.cn}.}

\thanks{E-mail addresses:
\texttt{202521511176@smail.xtu.edu.cn(Xiao Li)},
\texttt{wanghua@xtu.edu.cn} (Hua Wang).}

\thanks{The work of ST was supported by the NSFC
under Grant No. 12401483.}


\begin{abstract}
We establish a discrete $H^1$-Korn inequality for finite element
spaces satisfying a second order weak continuity condition.
This implies that classical $H^2$-nonconforming elements
satisfy the discrete Korn inequality.
Based on this result, we develop a stabilized Morley finite element
method for linear strain gradient elasticity using only piecewise
quadratic polynomials. Thanks to the discrete $H^1$-Korn inequality,
this method does not require any penalty parameter.
We prove that the method is robust with respect to both $\lambda$
and $\iota$. Numerical experiments confirm the theoretical results.

\end{abstract}

\keywords{
 Morley element, finite element method, Korn inequality, linear strain gradient elasticity}

\subjclass[2000]{ 65N30}

\maketitle

\def\a#1{\begin{align*}#1\end{align*}}\def\an#1{\begin{align}#1\end{align}}
\def\ad#1{\begin{aligned}#1\end{aligned}}

\section{Introduction}
We consider the following linear strain gradient elasticity(SGE) problem in pure displacement form:
\begin{equation}
	\begin{cases}
	\iota^2 \mathrm{div}  \Delta \sigma( u)-\mathrm{div}\sigma( u)= f\quad & \text{in }\Omega, \\
	 u = \frac{\partial  u}{\partial \mathbf{n}} =  0 \quad &\text{on }\partial\Omega,
	\end{cases}
	\label{LS}
\end{equation}
where $\Omega$ is a bounded d-dimensional domain, and $\mathbf{n}$ is the unit outward normal to $\partial \Omega$. $ u$ is a displacement field in $\mathbb{R}^d$, and $\sigma( u)=2\mu \epsilon( u)+\lambda(\mathrm{tr})(\epsilon( u))I$ denotes the stress tensor. Here $\mu\in (\mu_1,\mu_2)$ and $\lambda\in (0,+\infty)$ are Lamé constants, and $I$ is the identity matrix. The strain tensor is defined by $\epsilon(u)=\frac{1}{2}(\nabla u+(\nabla  u)^T)$. Problem \eqref{LS} can be viewed as elastic type of a fourth order singular perturbation problem. Comparing with the standard fourth order singular perturbation problem, \eqref{LS} have to consider the effect by the parameter $\lambda$. Our goal is to construct a displacement method that is robust with respect to both $\lambda$ and $\iota$ using only piecewise quadratic polynomial. 

\subsection{Linear strain gradient model}

Problem~\eqref{LS} was proposed by Aifantis
\cite{Ru1993A,Aifantis2011overview} as a simplified version of Mindlin's
strain-gradient elasticity theory \cite{MINDLIN1964,MINDLIN1965}. In contrast
to the general Mindlin theory, which involves several higher-order material
parameters, this model introduces only a single internal length-scale parameter.
Despite its simple constitutive structure, it effectively captures
size-dependent mechanical behavior at the micro- and nanoscale. In particular,
the higher-order regularization can remove the stress singularities predicted
by classical elasticity near crack tips, dislocation cores,
and concentrated loads, leading to bounded and smoother stress fields. 

\subsection{Numerical methods for the SGE model}
The main difficulty in designing a method robust
with respect to both $\iota$ and $\lambda$ for problem \eqref{LS} is that it must provide convergent approximations in both the second order and fourth order
terms, while the images of the finite element space under
$\operatorname{div}$ and $\nabla\operatorname{div}$ have suitable approximation properties. The use of $H^2$-conforming finite elements is
a natural choice. However, the numerical experiments reported in
\cite{TianThesis2021} exhibit a loss of convergence order as the material approaches
the incompressible limit. The $C^0$ interior penalty discontinuous
Galerkin ($C^0$-IPDG) method is also effective for singularly perturbed
problems \cite{BrennerSung2005}, but its may a not a $\lambda$-robust scheme.
Indeed, the $\nabla_h\operatorname{div}_h$ of $P_2$ Lagrange space does not map onto the full space of elementwise constant vector fields. Moreover, the standard $P_2$ Lagrange displacement method for
linear elasticity is not $\lambda$-robust in general \cite{AinsworthParker2022}. 

The locking-free discontinuous Galerkin(DG) method for linear elasticity was proposed in \cite{HansboLarson2002}. However, a fully DG formulation of the SGE problem involves additional trace terms, and requires $\lambda$-robust estimates for $\nabla\mathrm{div}$. Therefore, the analysis in \cite{HansboLarson2002} cannot be directly extended to the SGE problem.

Numerical methods for problem \eqref{LS} were introduced rather early in the
engineering literature, including conforming discretizations \cite{C1forLSG2009,C1forLSG2010,GAforLSG2011}
and two-step approaches \cite{Ru1993A,AskesMorataAifantis2008}. However, mathematic well-posedness and convergence
analyses of these methods are mentioned few, specially for the parameter-robust analysis. A mathematical analysis of an SGE discretization was presented in \cite{Ming2017}. Li, Ming, Shi established the well-posedness of weak formulation for problem \eqref{LS} and proposed an
$H^2$-nonconforming finite element method. Compared with
conforming methods, this approach avoids the difficulties in imposing
boundary conditions caused by the high continuity requirements at vertices and
involves fewer degrees of freedom. Nevertheless, the
method was shown to be robust only with respect to the length
parameter $\iota$.

Methods robust with respect to both $\iota$ and the Lamé parameter
$\lambda$ were developed in \cite{MingMixLSG,HuangLSGSIAM}. These methods treat
$\operatorname{div} u$ as an additional unknown and discretize
the resulting formulation using mixed finite elements. By employing
reduced integration, \cite{TianThesis2021} constructed another class of
parameter-robust nonconforming finite element methods in two dimensions. 
In \cite{HuangHuangTang2025}, a bubble-enriched $H^2$-nonconforming
displacement space was constructed whose divergence coincides with
an $H^1$-nonconforming scalar space. This compatibility permits an
equivalent displacement--pressure mixed formulation without
projecting the discrete divergence, which yields a method robust with respect to both $\lambda$
and $\iota$ without reduced integration. More recently, \cite{HuangTang2026} proposed another robust
mixed finite element method by directly discretizing
the full tensor field
$\nabla\varepsilon( u)$. Virtual element methods for problem \eqref{LS} on polygonal meshes were proposed
in \cite{HuangYu2026}.

\subsection{Main contributions}
The main contributions of this paper are a parameter-robust(for both $\iota$ and $\lambda$) discretization
of problem~\eqref{LS} based on the Morley element and the establishment of a discrete Korn inequality for finite element spaces satisfying a second order weak continuity condition, which implies that classical $H^2$-nonconforming elements satisfy the discrete Korn inequality. Unlike the quadratic
Lagrange element, the Morley element is a piecewise quadratic nonconforming finite
element that can be used to discretize the biharmonic equation directly.
In $d$ dimensions, its degrees of freedom consist of the integral averages
of the function over the $(d-2)$-dimensional subsimplices and the integral
averages of its normal derivative over the $(d-1)$-dimensional faces.
The canonical Morley interpolation operator satisfies commuting
properties~\cite{Brenner2015CR}, which yield approximation estimates
for the discrete divergence $\operatorname{div}_h$ and its broken
gradient $\nabla_h\operatorname{div}_h$. 

However, the Morley element does not yield a convergent
discretization of the Poisson equation and therefore cannot provide an $\iota$-robust scheme when applied
directly to the SGE model~\eqref{LS}. A modified Morley method was proposed in \cite{MingLS1},
where a continuous piecewise linear interpolation is employed in the
second order term. This method is not robust with respect to $\lambda$, because $P_1$ Lagrange element has the locking phenomena \cite{BabuskaSuri1992}. The locking-free analysis in~\cite{HansboLarson2002} relies on BDM interpolation. Since BDM element space generally do not belong to the Morley finite element space, the same approach cannot be directly applied to the second order elasticity term.

Consequently, the main difficulty in constructing a parameter-robust
Morley-based method lies in the treatment of the second order elasticity
term. Although the trace consistency terms generated by elementwise
integration by parts enable the Morley element to discretize the linear
elasticity operator, a direct application of the standard discontinuous Galerkin formulation leads to the following term
\[
  \lambda
  \sum_{F\in\mathcal F_h^i}
  \left\langle
    \{\operatorname{div}_h\boldsymbol v_h\},
    [\boldsymbol u-\Pi_M\boldsymbol u] 
  \right\rangle_F .
\]
Since the Morley degrees of freedom contain no moments of the function
values over the $(d-1)$-dimensional faces, a
direct estimate of this term is not uniform with respect to $\lambda$.
Motivated by \cite{YiLeeZikatanov2022}, we add a stabilization term to
the Morley discretization. However, the analysis for the enriched piecewise linear space does not extend directly to the piecewise quadratic Morley space. Thus, we modify the above term as follows:
\[
  \lambda
  \sum_{F\in\mathcal F_h^i}
  \left\langle
    \{P_T^0\operatorname{div}_h\boldsymbol v_h\},
    [\boldsymbol u-\Pi_M\boldsymbol u] 
  \right\rangle_F .
\]
Together with the commuting properties of the canonical Morley
interpolation operator, this modification yields a scheme robust
with respect to both $\iota$ and $\lambda$. In particular, these
commuting properties allow us to retain the original Morley space
without any additional enrichment.

Since our modification only on the second order term,
the fourth order term still is the standard Morley discretization.
Adapting the analysis in~\cite{HuZhangSPP}, we derive error
estimates under the natural energy regularity
$u\in H_0^2(\Omega;\mathbb R^d)$.
This low-regularity analysis is particularly useful for obtaining
estimates uniform in $\iota$, since higher order Sobolev norms
of the solution may grow as $\iota\to0$.

We also analyze a superpenalty Morley element method based
on~\cite{SPMWX}. The commuting properties of the canonical
Morley interpolation operator allow us to establish robustness
with respect to $\lambda$ without adding any projection
into the scheme. However, the superpenalty method requires an a priori choice
of the penalty exponent $p$. The resulting error bounds
exhibit a trade-off between higher convergence rates for
smooth solutions and convergence rates that are uniform
with respect to $\iota$.
\subsection{Korn inequality}
To establish the coercivity of the discrete bilinear form, we require
both a broken $H^2$--Korn inequality \cite{MingH2korn} for the fourth-order term and a
discrete $H^1$--Korn inequality for the second-order term. As shown in~\cite{MingH2korn}, the strain gradient controls the Hessian pointwise, which directly yields the required coercivity
estimate for the fourth-order term. Therefore, the key step in
establishing the coercivity of the discrete bilinear form is to
prove a discrete $H^1$-Korn inequality.

Discrete Korn inequalities for nonconforming finite element
spaces have been studied in various settings; see,
e.g.,~\cite{BrennerKorn2004,WilliamsHong2024,Wang1994Korn}.
In particular, the results in~\cite{BrennerKorn2004,Wang1994Korn}
imply that the two-dimensional Morley space satisfies the
discrete Korn inequality. To the best of our knowledge,
an analogous result for the three-dimensional Morley space
has not previously been established.

In this paper, we prove a discrete $H^1$--Korn inequality for every
finite element space satisfying the second-order weak continuity
condition. The proof combines the
continuous $H^2$ Korn inequality with local inverse estimates. This implies that the Morley element and other standard $H^2$-nonconforming elements satisfy the $H^1$-Korn inequality. Thanks to this property, no additional penalty
parameters are required for coercivity.

The paper is organized as follows.
Section~2 introduces the notation, the weak formulation of
the SGE model, and the relevant regularity estimates.
Section~3 presents the stabilized Morley method and establishes
the discrete Korn inequality and interpolation estimates.
Section~4 provides the a priori error analysis of the proposed
method. Section~5 introduces and analyzes the superpenalty
Morley method. Finally, Section~6 presents numerical experiments
illustrating the theoretical results.

\section{Preliminaries}
Some notations will be introduced in this section, which is used throughout this article. The inner product in the Lebesgue space $L^2(G)$ with respect to $G\subseteq \Omega$ is denoted by $(\cdot,\cdot)_{0,G}$ with the associated $L^2$ norm $\|\cdot\|^2_{0,G}=(\cdot,\cdot)_{0,G}.$  Standard Sobolev space \cite{Adams} on $G$ denote as $H^m(G)$, and define 
$(\cdot,\cdot)_{m,G}=(\nabla^m \cdot,\nabla^m\cdot)_{0,G}$. Let $|\cdot|^2_{m,G}=(\cdot,\cdot)_{m,G}$ be a semi-norm on $H^m(G),$ then standard norm  on $H^m(G)$ is $\|\cdot\|_{m,G}=\sum_{j=0}^{m}|\cdot|_{j,G}.$ If $G=\Omega$, we simply write 
$(\cdot,\cdot)_m$, $|\cdot|_m$ and $\|\cdot\|_m.$  $H_0^2(\Omega)$ represents the $H^2(\Omega)$ space satisfying clamped boundary condition and $H_0^1(\Omega)$ space represents $H^1(\Omega)$ space with homogeneous Dirichlet boundary condition. \\ \qquad

To give a robust estimate for $\iota$, we introduce fractional order Sobolev space first.  For a non-integer $s>0$, $H^s(\Omega)$ is a fractional order Sobolev space. Let $m$ be the largest integer less than $s$, i.e., $s-1< m<s$, then define the semi-norm $|\cdot|_s$ of $v$ as
$$
|v|^2_s = \sum_{|\alpha|=m}\int_{\Omega}\int_{\Omega}\frac{|\partial^{\alpha}v(\pmb x)-\partial^{\alpha}v(\pmb y)|^2}{|\pmb x -\pmb y|^{d+2(s-m)}}\mathrm{d}\pmb x\mathrm{d}\pmb y,
$$
and the corresponding norm is defined as 
$$
\|\cdot\|_s^2=\|\cdot\|_m^2+|\cdot|_s^2.
$$
Actually, $H^s(\Omega)$ can equivalently be obtained by interpolation between $H^m(\Omega)$ and $H^{m+1}(\Omega)$, namely
$$
H^s(\Omega)=[H^{m+1}(\Omega),H^{m}(\Omega)]_{\theta},
$$ 
where $\theta = m+1-s.$ The following inequality will helpful obtain $\iota$-robust estimation.
\begin{equation}
	\|v\|_{s}\leq C\|v\|_{m+1}^{1-\theta}\|v\|_m^{\theta}.
    \label{eq:sobolev-interpolation}
\end{equation} 
Here the constant $C$ depends on $\Omega$ and $s$.
Let $\mathcal T_h$ be a shape regular simplicial mesh of
$\Omega\subset\mathbb R^d$ and $h_T$ denote the diameter of $T$,~$h=\max_{T\in\mathcal{T}_h}\{h_T\}$.
For $j=1,\ldots,d$, let $\mathcal F^{d-j}(\mathcal T_h)$
denote the set of all $(d-j)$-dimensional faces of the mesh.
In particular, $\mathcal F^{d-2}(\mathcal T_h)$ consists of
vertices in two dimensions and edges in three dimensions.
A face $F\in\mathcal F^{d-j}(\mathcal T_h)$ is called a
boundary face if $F\subset\partial\Omega$.
We denote the set of such faces by
$\mathcal F_b^{d-j}(\mathcal T_h)$ and define the set of
interior faces by
\[
\mathcal F_i^{d-j}(\mathcal T_h)
:=\mathcal F^{d-j}(\mathcal T_h)
  \setminus\mathcal F_b^{d-j}(\mathcal T_h).
\]
For simplicity, we write
\[
\mathcal F_h:=\mathcal F^{d-1}(\mathcal T_h),
\qquad
\mathcal F_h^i:=\mathcal F_i^{d-1}(\mathcal T_h),
\qquad
\mathcal F_h^b:=\mathcal F_b^{d-1}(\mathcal T_h).
\]
For each element or face $G$ of the mesh, let
$h_G:=\operatorname{diam}(G)$, and set
$h:=\max_{T\in\mathcal T_h}h_T$.
Let 
$$H^m(\mathcal{T}_h)=\{v\in L^2(\mathcal{T}_h)|~v|_T\in H^m(T),~\forall T\in\mathcal{T}_h\}.$$
Then we define the broken inner product, norm and seminorm by 
\[ 
(v,w)_{\mathcal{T}_h}:=\sum_{T\in\mathcal{T}_h}(v,w)_{0,T}, 
\qquad 
\|v\|_{H^s(\mathcal{T}_h)}^2:=\sum_{T\in\mathcal{T}_h}\|v\|_{s,T}^2, 
\] 
\[ 
|v|_{H^s(\mathcal{T}_h)}^2:=\sum_{T\in\mathcal{T}_h}|v|_{s,T}^2, 
\qquad 
\|v\|_{L^2(\mathcal{T}_h)}:=\|v\|_{H^0(\mathcal{T}_h)}. 
\] 
For integer $m$, this gives 
\[ 
(\nabla_h^m v,\nabla_h^m v)_{\mathcal{T}_h}=\sum_{T\in\mathcal{T}_h}(\nabla^m v,\nabla^m v)_{0,T} 
=|v|_{H^m(\mathcal{T}_h)}^2. 
\] 
The operators $\nabla_h$, $\operatorname{div}_h$, and $D_h^2$
denote the broken gradient, divergence, and Hessian, respectively. For example,
\[
(\nabla_h v)|_T := \nabla(v|_T),
\qquad \forall T\in\mathcal T_h,\quad
\forall v\in H^1(\mathcal T_h).
\]
For a piecewise $H^1$ vector field $v$, we write 
\[ 
\epsilon_h(v):=\tfrac12\bigl(\nabla_h v+(\nabla_h v)^T\bigr), 
\qquad 
\sigma(v):=2\mu\epsilon_h(v)+\lambda\operatorname{div}_h v\,I. 
\] 
Similarly 
$
(\cdot,\cdot)_{\mathcal{F}_h}= \sum_{F\in\mathcal{F}_h}(\cdot,\cdot)_{0,F}. 
$
For $F\in \mathcal{F}^i_h$, let $T^+$ and $T^-$ be two elements sharing $F$. Next we will define the jump and average operators for functions in $H^{s}(\mathcal{T}_h),~s>1/2.$ 
The average operator is defined by 
$$
\{v\}|_F=\frac{1}{2}(v|_{T^+\cap F}+v|_{T^-\cap F}),
$$ 
where $v$ can be a scalar, vector or matrix-valued function. Let $\pmb n_{\pm}$ be an out unit normal vector for $F\cap T_{\pm}.$ Write $v^\pm:=v|_{T^\pm}$ for the two traces. For a scalar function $v$, we define its jump across $F$ by 
$$
[v]|_F=v^+\pmb n_+ + v^-\pmb n_-, 
$$
which is vector-valued.
For a vector function $v$, the jump operator is defined as
$$
[v]|_F=v^+\cdot \pmb n_+ + v^-\cdot \pmb n_-, \text{ and } \llbracket v \rrbracket|_F=v^+\otimes \pmb n_+ + v^-\otimes \pmb n_-. 
$$
Here $\otimes$ denotes the outer product, and the $[v]$ will be a scalar and $\llbracket v \rrbracket$ be a matrix.
If $v$ is a matrix function, define
$$
[v]|_F=v^+ \pmb n_+ + v^- \pmb n_-, \text{ and } \llbracket v \rrbracket|_F=v^+\otimes \pmb n_+ + v^-\otimes \pmb n_-, 
$$
where $[v]$ is vector-valued and $\llbracket v\rrbracket$ is a third-order tensor. 
For a third-order tensor $G$, the normal jump contracts its last index: 
\[ 
[G]_{ij}|_F:=\sum_{k=1}^d 
\bigl(G_{ijk}|_{T^+}(\pmb n_+)_k+G_{ijk}|_{T^-}(\pmb n_-)_k\bigr). 
\] 
On a boundary face, the average equals the interior trace, and the jumps 
are given by the same formulas with the exterior trace set to zero and 
the interior normal equal to the outward normal to $\partial\Omega$. 
We denote the outward unit normal to $\partial T$ by $\pmb n_T$. 
The notation $a\lesssim b$ means $a$ can be controlled by $Cb$, where the constant $C$ is independent of mesh-size and $\iota,\lambda$ but may depend on $\mu$. 

The weak formulation of problem \eqref{LS} is to find $u\in H_0^2(\Omega;\mathbb{ R}^d)$ such that
\begin{equation}
    \iota^2(\nabla\sigma(u),\nabla \epsilon(v)) + (\sigma(u),\epsilon(v)) = (f,v),~\forall v\in H_0^2(\Omega;\mathbb{R}^d). 
    \label{eq:LSG-cweak}
\end{equation}
The following $H^2$-Korn inequality implies the coercivity.
\begin{mylem}[\cite{MingH2korn}]
For any $v\in (H^2(\Omega))^d$, $d=2,3$, there holds
\begin{equation}
    |\nabla\epsilon(v)|^2 \geq C(d)|D^2v|^2
    \qquad \text{a.e. in }\Omega,
    \label{eq:H2-korn}
\end{equation}
where $|\cdot|$ denotes the Euclidean norm of a tensor
and $C(d)>0$ depends only on $d$.
\end{mylem}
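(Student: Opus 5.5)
Since \eqref{eq:H2-korn} is a pointwise statement, I will prove it as a purely algebraic inequality. Fix a point $x$ and set $a_{ijk}:=\partial_j\partial_k v_i(x)$. This tensor is symmetric in $(j,k)$. The strain gradient is
\[
e_{ijk}:=\partial_k\epsilon_{ij}(v)=\tfrac12\bigl(a_{ijk}+a_{jik}\bigr),
\]
which is symmetric in $(i,j)$. The claim then reduces to showing that the linear map $L:a\mapsto e$, restricted to the finite-dimensional space $S$ of tensors symmetric in the last two indices, satisfies $|La|^2\ge C(d)|a|^2$ for all $a\in S$. Because $S$ is finite dimensional and $a\mapsto|La|^2$ is a quadratic form, it is enough to prove that $L$ is injective on $S$. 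One then takes $C(d)$ to be the minimum of $|La|^2$ over the unit sphere of $S$, which is positive and depends only on $d$. Applying this at a.e. $x$ gives the lemma for $v\in (H^2(\Omega))^d$.

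\textbf{Injectivity.} This is the classical argument that a field with vanishing strain has vanishing second derivatives. Suppose $a\in S$ with $a_{ijk}+a_{jik}=0$ for all $i,j,k$. Then $a$ is antisymmetric in $(i,j)$ and symmetric in $(j,k)$. Alternating these two symmetries gives
\[
a_{ijk}=-a_{jik}=-a_{jki}=a_{kji}=a_{kij}=-a_{ikj}=-a_{ijk},
\]
so $a=0$. This is the same algebra that yields the identity $\partial_j\partial_k v_i=\partial_k\epsilon_{ij}+\partial_j\epsilon_{ik}-\partial_i\epsilon_{jk}$.

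\textbf{Explicit constant.} That identity also gives a constant directly. By Cauchy--Schwarz,
\[
|D^2v|^2\le 3\sum_{i,j,k}\bigl(|\partial_k\epsilon_{ij}|^2+|\partial_j\epsilon_{ik}|^2+|\partial_i\epsilon_{jk}|^2\bigr)=9|\nabla\epsilon(v)|^2,
\]
so $C(d)=1/9$ works in any dimension. I would present this direct identity-based argument as the proof, since it avoids compactness, and note the sharper constant of \cite{MingH2korn} as optional.

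\textbf{Remaining check.} The only point to verify is the identity itself, which is a routine expansion using $\epsilon_{ij}=\frac12(\partial_jv_i+\partial_iv_j)$ and the symmetry of second derivatives. For $H^2$ functions this symmetry holds a.e. because weak derivatives commute. There is no genuine obstacle beyond this bookkeeping.
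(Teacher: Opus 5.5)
The paper gives no proof of this lemma; it quotes it from \cite{MingH2korn}. Your argument is therefore a self-contained replacement rather than a reconstruction, and it is correct.

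The identity $\partial_j\partial_k v_i=\partial_k\epsilon_{ij}+\partial_j\epsilon_{ik}-\partial_i\epsilon_{jk}$ expands as claimed. The bound $(x+y-z)^2\le 3(x^2+y^2+z^2)$, together with relabelling the three sums, gives $|D^2v|^2\le 9|\nabla\epsilon(v)|^2$. Applying a purely algebraic inequality to a.e.\ defined weak derivatives, which commute a.e., is legitimate. Once you have the identity, the injectivity/compactness paragraph is superfluous, though it is also correct.

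Compared with citing the literature, your route is one line and gives a constant independent of $d$. It is also genuinely pointwise, which is exactly what the paper needs when it applies the lemma elementwise to broken quadratics in Theorem~\ref{NC-H2-Korn} and Lemma~\ref{lem:covercivity}. Its only weakness is that $1/9$ is not sharp.

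You can get the optimal constant by equally elementary means. Write $a_{ijk}=\partial_j\partial_k v_i$, set $\langle a,\tau a\rangle=\sum_{i,j,k}a_{ijk}a_{jik}$, and let $\mathrm{Sym}$ denote full symmetrization of a third-order tensor. Since $a$ is symmetric in its last two indices, one checks $|\nabla\epsilon(v)|^2=\tfrac12(|a|^2+\langle a,\tau a\rangle)$ and $|\mathrm{Sym}\,a|^2=\tfrac13(|a|^2+2\langle a,\tau a\rangle)$. Hence
\[
|\nabla\epsilon(v)|^2=\tfrac14|D^2v|^2+\tfrac34|\mathrm{Sym}(D^2v)|^2\ge\tfrac14|D^2v|^2
\]
in every dimension. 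Equality holds, e.g., for $v=(x_2^2,-x_1x_2)$ in two dimensions, where $\mathrm{Sym}(D^2v)=0$.
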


Consider the limiting case of problem \eqref{eq:LSG-cweak}, i.e., the linear elasticity problem
\begin{equation}
	\begin{cases}
&		-\mathrm{div}\sigma(u_0)=f,~\text{in } \Omega\\
&u_0=0,~\text{on } \partial\Omega.
	\end{cases}
\label{Eq:Elasticity}
\end{equation}
We assume that
\begin{equation}
	\|\sigma(u_0)\|_{1} 
	\lesssim
	\|f\|_{0}. 
    \label{eq:regularity-elastic}
\end{equation}
Regularity estimate \eqref{eq:regularity-elastic} is established when $\Omega$ is convex in 2D \cite{BrennerSung1992}.
For problem \eqref{eq:LSG-cweak}, we assume that
the solutions $u$ and $u_0$ satisfy
\begin{align}
&\lambda\|\mathrm{div}(u-u_0)\|_0+|u-u_0|_1
\lesssim \iota^{1/2}\|f\|_0,
\label{Reg-u0}\\
&\sum_{i=1}^2\iota^i
\bigl(|u|_{i+1}+\lambda|\mathrm{div}u|_i\bigr)
\lesssim \iota^{1/2}\|f\|_0,
\label{Reg}
\end{align}
with hidden constants independent of $\lambda$ and $\iota$. 
For sufficient conditions ensuring these estimates \cite{HuangLSGSIAM,MingMixLSG}. Combining \eqref{eq:sobolev-interpolation}, gives
\[
\|u\|_{3/2}+\|\mathrm{div }u\|_{1/2}\lesssim \|f\|_0.
\]
\section{Finite element discretization}
This section will introduce a robust finite element method for problem \eqref{eq:LSG-cweak} based on the Morley element\cite{Morley}. The local shape function space is $P_2(T)$, and the degrees of freedom are defined by
\begin{equation}
	\begin{cases}
&		\int_{e}v\mathrm{d}l,~\text{for all }e\in \mathcal{F}^{d-2}(T) ,\\
&		\int_F \partial_n v\mathrm{d}s, \text{for all }F\in \mathcal{F}^{d-1}(T).
	\end{cases}
    \label{eq:dofMorley}
\end{equation}
The associated Morley element space $V_M$ is
\begin{align*}
V_M=\bigl\{
v\in (L^2(\Omega))^d:\;&
v|_T\in (P_2(T))^d,
\quad \forall T\in\mathcal{T}_h,
\int_F
\bigl[\nabla_h v\bigr]\,ds=0, 
\quad \forall F\in\mathcal{F}_i^{d-1}(\mathcal{T}_h),\\
&
\int_e v|_T\,ds=\int_e v|_{T'}\,ds,
\quad \forall e\in\mathcal{F}^{d-2}(\mathcal{T}_h),
\quad \forall T,T'\in\omega_e
\bigr\},
\end{align*}
where $\omega_e$ is a star patch of $(d-2)$-dimensional face $e$, i.e., 
\[
\omega_e
:=\{T\in\mathcal{T}_h:\ e\subset\overline{T}\}.
\]
When $d=2$, Morley element will continuous on each interior vertices. And $V_M^0$ denotes the Morley element space with vanish boundary degrees of freedom. 
It should be noted that $\int_F[v_h]\mathrm{d}s$ will not vanish in the Morley element space. This implies that Morley elements may not converge for solving second order elliptic equations. For this reason, we propose the following finite element discretization for problem \eqref{eq:LSG-cweak}: Find $ u_h \in V^0_M$ such that 
   
   \begin{equation}
   	\mathcal{A}( u_h, v_h) =( f, v_h)_{\mathcal{T}_h} ,\text{ for all } v_h \in V^0_M, 
   	\label{eq:LSG-weak}
   \end{equation}
   where 
   $$\begin{aligned}
&   \mathcal{A}=a_{\iota}(u_h,v_h)+J(v_h,u_h)-J(u_h,v_h)+s(u_h,v_h),\\
  & 	a_\iota( u_h, v_h) = \iota^2 (\nabla_h \sigma( u_h), \nabla_h\epsilon_h( v_h))_{\mathcal{T}_h}+  (\sigma( u_h), \epsilon_h( v_h))_{\mathcal{T}_h}, \\ 
   	&J(v_h,u_h)= ( \{P_T^0\sigma( v_h)\},  \llbracket u_h\rrbracket)_{\mathcal{F}_h},\\& s(u_h,v_h)= \lambda^2(h_F[P_T^0\operatorname{div}_h u_h],[P_T^0\operatorname{div}_h v_h])_{\mathcal{F}^i_h} .  
   \end{aligned}$$ 
Here $P_T^0$ denotes the $L^2$-orthogonal projection onto
constant on $T$, applied componentwise to vector-
and tensor-valued functions, moreover we have 
\[
P_T^0\epsilon_h(u_h)=\epsilon_h(u_h)|_{x_T},~P_T^0\operatorname{div}_h u_h=\operatorname{div}_h u_h|_{x_T},~\forall u_h\in V_M. 
\]
where $x_T$ is the center point of $T$. The projection of the volumetric term plays an important role
in the $\lambda$-robust error analysis. Actually, one alternative formulation is that we only project the divergence term instead of the full $\sigma(u)$ in $J(v_h,u_h)$, i.e.,     
  \begin{equation} 
  	J'(v_h,u_h)=  (\{2\mu\epsilon_h(v_h)+\lambda P_T^0\operatorname{div}_h  v_h\mathrm{I}\},\llbracket u_h\rrbracket)_{\mathcal{F}_h}. 
  	\label{LSG-FEM'}
  \end{equation} 
The non-symmetric part leads the coercivity of \eqref{eq:LSG-weak} independent of $\lambda$. The stabilization term is inspired by \cite{YiLeeZikatanov2022}, and the projection in the stabilization term leads the inconsistency.    
\subsection{Well-posedness}
Define the following $\iota$-related norm
$$
\interleave  v \interleave _{\iota,h}^2 := \sum_{T \in \mathcal{T}_h} \Big( \iota^2 | v |^2_{2,T} +| v |^2_{1,T} \Big).
$$
The coercivity can be directly from the Morley element satisfies the $H^1$ and $H^2$ Korn inequality, even though it is not a continuous element. In fact, we can prove all finite elements with second order weak continuity satisfy Korn inequality. 
\begin{definition}[Second order weak continuity]
	A finite element space $V_h$ is said to
	satisfy the \emph{second order weak continuity property} if, for
	every $v_h\in V_h$, every $F=T^+\cap T^-\in\mathcal F_h^i$, and
	every unit vector $\gamma\in\mathbb R^{d}$, there exist points
	$x_F,x_{F,\gamma}\in F$ such that
	\[
		v_h^+(x_F)=v_h^-(x_F),
		\qquad
		\partial_\gamma v_h^+(x_{F,\gamma})
		=\partial_\gamma v_h^-(x_{F,\gamma}).
	\]
	For a boundary face $F\in\mathcal F_h^b$, the corresponding
	homogeneous conditions are
	\[
		v_h(x_F)=0,
		\qquad
		\partial_\gamma v_h(x_{F,\gamma})=0.
	\]
\end{definition}
Most classical $H^2$-nonconforming elements satisfy the
second order weak continuity property.
In our definition, this property includes the corresponding
weak homogeneous boundary conditions and therefore ensures
that the broken $H^2$ seminorm is a norm on $V_{NC}^0$.
Let $I_A$ be the projection-averaging interpolation operator \cite{ShiWang2013FEM} \[I_A:V_{NC}^0\rightarrow V_A\cap H_0^2(\Omega),\] where $V_A$ is the finite element space of the Argyris element \cite{Argyris,Zhang2009C1}. Such operators are also referred to as enriching operators
or conforming companion operators \cite{MallikNataraj2016,Gallistl2015Morley}.
\begin{mylem}[\cite{ShiWang2013FEM}] 
\label{Lem:Enrich-error}
Let $V_{NC}^0$ be a finite element space satisfying the second order weak continuity property. 
	Then for all $v_h\in V_{NC}^0$ and 
$0\leq j\leq i\leq 2$,
\begin{equation}
|v_h-I_Av_h|_{H^j(\mathcal T_h)}^2
\lesssim
\sum_{T\in\mathcal T_h}
h_T^{2(i-j)}|v_h|_{H^i(T)}^2.
\label{eq:Enrich-error}
\end{equation}
\end{mylem}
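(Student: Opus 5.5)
The approach is the standard enriching-operator argument. Construct $I_A$ locally, then compare $v_h$ with $I_Av_h$ on each element through a scaling argument that only sees the interelement jumps. These jumps are controlled by the second order weak continuity property.

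\textbf{Construction of $I_A$.} Take the Argyris space $V_A$ (quintic in 2D, or the corresponding $C^1$ element of \cite{Zhang2009C1} in 3D). Define $I_Av_h$ nodal-value by nodal-value. Each Argyris degree of freedom $N$ at a vertex, edge or face of the mesh is set to the average of $N(v_h|_T)$ over all elements $T$ containing that subsimplex. On $\partial\Omega$ the degrees of freedom are set to zero, so that $I_Av_h\in H_0^2(\Omega)$.

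\textbf{Local bound via equivalence of norms.} On each $T$, $v_h|_T$ is a polynomial of fixed degree and so lies in the local Argyris shape space. Hence $v_h-I_Av_h$ restricted to $T$ is determined by the differences $N(v_h|_T)-N(v_h|_{T'})$ for $T'$ sharing the node. A scaling and equivalence-of-norms argument gives
\[
|v_h-I_Av_h|_{j,T}^2\lesssim \sum_{N}h_T^{d-2j+2|N|}\,|N(v_h|_T)-N(v_h|_{T'})|^2 ,
\]
where $|N|$ is the derivative order of $N$ and the sum runs over the nodes of $T$ and the neighbours $T'$. Each difference telescopes through a chain of faces $F$ in the patch $\omega_T$. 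It is therefore bounded by jumps of $\partial^\alpha v_h$ ($|\alpha|\le 2$), and of the boundary traces, evaluated at a point of some face $F$.

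\textbf{Jumps via the weak continuity points.} This is the main step. Since $[v_h]$ vanishes at $x_F$, the jump of $v_h$ at any point of $F$ is bounded by $h_F\|\nabla_F[v_h]\|_{L^\infty(F)}$. The tangential derivatives $\partial_\gamma[v_h]$ in turn vanish at $x_{F,\gamma}$ for every direction $\gamma$, and therefore
\[
|\partial_\gamma[v_h](x)|\lesssim h_F\|\nabla^2_h v_h\|_{L^\infty(T^+\cup T^-)}.
\]
Second derivative jumps are bounded directly by $\|\nabla^2_hv_h\|_{L^\infty}$. Higher derivatives are handled by inverse estimates $\|\nabla^k v_h\|_{L^\infty(T)}\lesssim h_T^{2-k-d/2}|v_h|_{2,T}$ for $k\ge2$. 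Combining, every term in the local bound is $\lesssim h_T^{2(2-j)}|v_h|_{2,\omega_T}^2$. Boundary faces are treated the same way using the homogeneous conditions. Shape regularity bounds the overlap of the patches, so summing over $T$ gives \eqref{eq:Enrich-error} for $i=2$.

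\textbf{The cases $i<2$.} For $i=1$ or $i=0$, with $j\le i$, use instead the weaker bounds that keep fewer orders of the chain: for instance, bound $|\partial_\gamma[v_h]|$ by $\|\nabla_hv_h\|_{L^\infty}$ directly, and $|[v_h]|$ by $\|v_h\|_{L^\infty}$. Each $L^\infty$ norm is then converted by an inverse estimate. This gives $h_T^{2(i-j)}|v_h|_{i,\omega_T}^2$.

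\textbf{Main obstacle.} The delicate point is this last reduction for $i<2$, because constants are not in the kernel when $j=0=i$. It works only because every Argyris degree of freedom difference is a genuine jump. This in turn relies on the weak continuity property holding for all $\gamma$, including normal directions, together with the boundary conditions.
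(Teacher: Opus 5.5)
Your argument is correct and is essentially the standard enriching-operator proof behind this lemma, which the paper quotes from Shi--Wang rather than proving: average the degrees of freedom of a $C^1$ (Argyris/Zhang) element, bound $(v_h-I_Av_h)|_T$ by scaled DOF differences, and control each difference by jumps or boundary traces that vanish at the points $x_F$, $x_{F,\gamma}$. One correction to your closing remark: $i=j=0$ is just $L^2$-stability via inverse estimates and needs no jump structure; $i=1$ uses only the points $x_F$, so there you must still bound $|[v_h]|$ by $h_F\|\nabla_h v_h\|_{L^\infty}$ rather than by $\|v_h\|_{L^\infty}$; and it is $i=2$ that genuinely needs the derivative conditions at $x_{F,\gamma}$ for all directions $\gamma$.
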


\begin{myTheo}[Korn inequality for $H^2$-nonconforming FE]
	If $V_{NC}^0$ satisfies the $2$nd order weak continuity property, then 
	\begin{equation}
	|v_h|_{H^1(\mathcal{T}_h)}	\lesssim \|\epsilon_h(v_h)\|_{L^2(\mathcal{T}_h)}. 
	\end{equation}
\label{NC-H2-Korn}
\end{myTheo}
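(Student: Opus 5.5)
Our plan is to split $v_h$ into a conforming part and a nonconforming remainder, apply the continuous $H^1$-Korn inequality to the conforming part, and then remove the remainder's contribution, which can only be controlled by the broken $H^2$ seminorm, by a compactness (contradiction) argument.

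\textbf{Step 1: splitting.} Let $w:=I_Av_h\in H_0^2(\Omega;\mathbb R^d)\subset H_0^1(\Omega;\mathbb R^d)$. The classical first Korn inequality on $H_0^1$ gives $|w|_1\le\sqrt2\|\epsilon(w)\|_0$. With the triangle inequality and Lemma~\ref{Lem:Enrich-error} (case $j=i=1$), we obtain
\[
|v_h|_{H^1(\mathcal T_h)}\le |w|_1+|v_h-w|_{H^1(\mathcal T_h)}
\lesssim \|\epsilon_h(v_h)\|_{L^2(\mathcal T_h)}+|v_h-w|_{H^1(\mathcal T_h)}.
\]
The whole difficulty is the remainder $|v_h-I_Av_h|_{H^1(\mathcal T_h)}$. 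Using $j=i=1$ only reproduces $|v_h|_{H^1(\mathcal T_h)}$ with an uncontrolled constant, which is circular. Using $j=1$, $i=2$ gives $h\,|v_h|_{H^2(\mathcal T_h)}$, which involves second derivatives.

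\textbf{Step 2: bounding the remainder by the strain.} To control second derivatives we use the pointwise $H^2$-Korn inequality \eqref{eq:H2-korn} elementwise: $|v_h|_{2,T}\lesssim\|\nabla\epsilon(v_h)\|_{0,T}$. A local inverse estimate then gives $\|\nabla\epsilon(v_h)\|_{0,T}\lesssim h_T^{-1}\|\epsilon(v_h)\|_{0,T}$. Combining these with Lemma~\ref{Lem:Enrich-error},
\[
|v_h-I_Av_h|_{H^1(\mathcal T_h)}^2\lesssim\sum_T h_T^2|v_h|_{2,T}^2\lesssim\sum_T h_T^2\|\nabla\epsilon(v_h)\|_{0,T}^2\lesssim\|\epsilon_h(v_h)\|_{L^2(\mathcal T_h)}^2 .
\]
This closes the estimate: the factor $h_T^2$ from the enriching estimate exactly cancels $h_T^{-2}$ from the inverse inequality. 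The inverse estimate is legitimate because $\epsilon(v_h)|_T$ lies in a fixed finite-dimensional polynomial space on a shape-regular element, with constant depending only on the polynomial degree and shape regularity.

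\textbf{Step 3: where care is needed.} The main point to check is that Lemma~\ref{Lem:Enrich-error} holds with a constant independent of $h$ for the pair $(j,i)=(1,2)$. This requires the second order weak continuity, including the boundary conditions, since that is what makes $I_A$ well defined into $H_0^2$. We also need the constant of \eqref{eq:H2-korn} to apply on each element, which holds because it is pointwise. If the inverse estimate step were not available, we would fall back to a Brenner-type argument with jump terms. Here, however, the chain closes directly, and the hidden constant depends only on $d$, the shape regularity and the polynomial degree.
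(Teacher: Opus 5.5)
Your proof is correct and is essentially the paper's argument. You split off $I_Av_h$, apply the continuous Korn inequality to the conforming part, and bound $|v_h-I_Av_h|_{H^1(\mathcal T_h)}$ via the enriching estimate with $(j,i)=(1,2)$, the pointwise $H^2$-Korn inequality, and an elementwise inverse estimate, so that $h_T^2$ cancels $h_T^{-2}$. Two presentation fixes: the compactness/contradiction argument announced in your opening sentence is never used and should be dropped, and citing the case $j=i=1$ in Step 1 is unnecessary, since $\|\epsilon_h(v_h-w)\|_{L^2(\mathcal T_h)}\le|v_h-w|_{H^1(\mathcal T_h)}$ already suffices.
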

\begin{proof}
For all $v_h\in V_{NC}^0$ we have
	\begin{equation} 
	|v_h|_{H^1(\mathcal{T}_h)}\leq |v_h-I_Av_h|_{H^1(\mathcal{T}_h)}+|I_Av_h|_1. 
	\end{equation}
Noting that $I_Av_h\in H_0^2(\Omega),$ then 
\[
|I_Av_h|_1\lesssim \|\epsilon(I_Av_h)\|_0\leq \|\epsilon(v_h)\|_0+\|\epsilon(I_Av_h-v_h)\|_0 . 
\]
Inverse inequality, $H^2$ Korn inequality and stability \eqref{eq:Enrich-error} imply
\[
\begin{aligned}
\|\epsilon_h(I_Av_h-v_h)\|^2_0
\le |I_Av_h-v_h|^2_{1,h}\lesssim
\sum_{T\in\mathcal T_h}
h_T^2|v_h|_{2,T}^2\lesssim
\sum_{T\in\mathcal T_h}
h_T^2|\epsilon_h(v_h)|_{1,T}^2\lesssim \|\epsilon_h(v_h)\|^2_0.
\end{aligned}
\]

Thus, we finish the proof.
\end{proof}
\begin{mylem}[Coercivity]
	For any $v_h\in V_M^0$
	\begin{equation}
		\interleave  v_h \interleave _{\iota,h}^2+\sum_{F \in \mathcal{F}^i_{h}}h_F\lambda^2\|[P_T^0\operatorname{div}_h v_h]\|^2_{0,F} \lesssim \mathcal{A}(v_h,v_h). 
	\end{equation}
Here the hidden constant is independent of $\lambda$ and $\iota$, but may depend on $\mu$ and shape regular constant of the mesh.  
\label{lem:covercivity}
\end{mylem}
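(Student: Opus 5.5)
Setting $u_h=v_h$ in $\mathcal A$, the nonsymmetric terms $J(v_h,v_h)-J(v_h,v_h)$ cancel exactly. This leaves
\[
\mathcal A(v_h,v_h)=\iota^2(\nabla_h\sigma(v_h),\nabla_h\epsilon_h(v_h))_{\mathcal T_h}+(\sigma(v_h),\epsilon_h(v_h))_{\mathcal T_h}+\lambda^2\sum_{F\in\mathcal F_h^i}h_F\|[P_T^0\operatorname{div}_h v_h]\|_{0,F}^2 .
\]
This is the whole reason for the skew-symmetric choice $J(v_h,u_h)-J(u_h,v_h)$: no penalty parameter is needed to absorb consistency terms. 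The stabilization part already appears on the left-hand side of the claimed estimate with constant one, so it remains to bound $\interleave v_h\interleave_{\iota,h}^2$ by the first two terms.

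For the second order term, I expand $\sigma$:
\[
(\sigma(v_h),\epsilon_h(v_h))_{\mathcal T_h}=2\mu\|\epsilon_h(v_h)\|_{L^2(\mathcal T_h)}^2+\lambda\|\operatorname{div}_h v_h\|_{L^2(\mathcal T_h)}^2\ge 2\mu_1\|\epsilon_h(v_h)\|_{L^2(\mathcal T_h)}^2 ,
\]
using $\lambda>0$. Dropping the $\lambda$ term is what keeps the constant independent of $\lambda$. Since $V_M^0$ satisfies the second order weak continuity property, Theorem \ref{NC-H2-Korn} applies and gives $|v_h|_{H^1(\mathcal T_h)}^2\lesssim\|\epsilon_h(v_h)\|^2_{L^2(\mathcal T_h)}$.

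To check that this property holds, take a Morley function on an interior face $F$. The jump of each component has vanishing mean on every $(d-2)$-subface of $F$. A polynomial of degree at most $2$ on $F$ with zero mean along a segment has a zero on it, which yields a point $x_F$ with $v_h^+(x_F)=v_h^-(x_F)$. For the derivatives, $\int_F[\partial_n v_h]=0$ from the degrees of freedom, and tangential derivatives of the jump integrate to zero on $F$ by integration by parts together with the edge-mean continuity. Hence $\int_F[\partial_\gamma v_h]=0$ for every $\gamma$, and since the jump of $\partial_\gamma v_h$ is linear on $F$, it vanishes at some point of $F$. The same argument handles boundary faces via the vanishing boundary degrees of freedom.

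For the fourth order term, I write pointwise
\[
\nabla\sigma(v_h):\nabla\epsilon(v_h)=2\mu|\nabla\epsilon(v_h)|^2+\lambda|\nabla\operatorname{div}v_h|^2\ge 2\mu_1|\nabla\epsilon(v_h)|^2 ,
\]
and apply the pointwise $H^2$-Korn inequality \eqref{eq:H2-korn} on each $T$, where $v_h|_T\in P_2(T)^d$ is smooth. This gives $\iota^2|v_h|_{2,T}^2\lesssim\iota^2(\nabla\sigma(v_h),\nabla\epsilon(v_h))_{0,T}$, and summing over $T$ controls $\iota^2|v_h|^2_{H^2(\mathcal T_h)}$. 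Adding the three bounds yields the coercivity estimate, with constants depending only on $\mu_1$, $d$ and shape regularity.

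The main obstacle is justifying the application of Theorem \ref{NC-H2-Korn}, namely that the Morley space satisfies the second order weak continuity property in both 2D and 3D; everything else is algebraic. If the pointwise formulation turns out to be delicate, I would instead argue through the enriching operator. Lemma \ref{Lem:Enrich-error} for Morley is classical and holds using only the mean-value continuity of the degrees of freedom, and the proof of Theorem \ref{NC-H2-Korn} uses nothing beyond that lemma.
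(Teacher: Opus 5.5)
Your proposal is correct and follows the argument the paper only sketches. At $u_h=v_h$ the skew-symmetric $J$-terms cancel, the nonnegative $\lambda$-contributions in both elasticity terms are dropped, and $\interleave v_h\interleave_{\iota,h}^2$ is then controlled by Theorem~\ref{NC-H2-Korn} and the pointwise $H^2$-Korn inequality \eqref{eq:H2-korn}, while $s(v_h,v_h)$ appears verbatim; your explicit check that the Morley space has the second order weak continuity property (zeros of the jump from the vanishing subface means, and zero-mean affine gradient jumps via integration by parts on $F$) supplies a step the paper only asserts.
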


\subsection{Interpolation estimates}
In this section we will give the error analysis of \eqref{eq:LSG-weak}. Define
the interpolation operator of Morley element $\Pi_M : H^2(\Omega) \longrightarrow V_M$ and
$\Pi_Mu|_T \in  P_2(T)$ is
$$\left\{\begin{aligned}\int_e\Pi_M u|_T\mathrm{d}s&= \int_eu|_T\mathrm{d}s,~e\in \mathcal{F}^{d-2}(T),\\
	\int_F\partial_n\Pi_Mu|_T dS&=\int_F\partial_nu|_T\mathrm{d}S,~\forall F\in \mathcal{F}^{d-1}(T).\end{aligned}\right.$$
For vector-valued functions, the interpolation operator
$\Pi_M$ is defined componentwise.  The following commuting property can be found in \cite{Brenner2015CR}.
\begin{mylem} 
	For any $ v \in H^2(\Omega)$, then following commutativity property hold true:
	\begin{equation}
		D_h^2 \Pi_M{ v}=P_T^0 D^2 v,\quad 
		\nabla_h\Pi_M  u= \Pi _{cr}\nabla   u. 
		\label{eq:commuting}
	\end{equation}
	Here $\Pi_{cr}$ is the interpolation operator of Crouzeix-Raviart element \cite{CrouzeixRaviart1973}, defined on each $T\in\mathcal T_h$ by $(\Pi_{cr}v)|_T\in P_1(T)$ and 
\[
\int_F(\Pi_{cr}v)|_T\,ds=\int_F v\,ds,
\quad \forall F\subset\partial T.
\]
\end{mylem}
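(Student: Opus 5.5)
The plan is to reduce both identities in \eqref{eq:commuting} to one fact: for every $T\in\mathcal T_h$ and every face $F\in\mathcal F^{d-1}(T)$,
\begin{equation*}
\int_F \nabla(\Pi_M v|_T)\,ds=\int_F \nabla v\,ds .
\end{equation*}
This is enough for the second identity. Indeed, $\nabla(\Pi_M v|_T)\in (P_1(T))^d$, and a function in $P_1(T)$ is uniquely determined by its face averages, since these are exactly the Crouzeix--Raviart degrees of freedom. Hence $\nabla_h\Pi_M v=\Pi_{cr}\nabla v$ on each $T$. The argument is componentwise, so it applies verbatim to vector fields $u$.

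To prove the face identity, set $w=v-\Pi_M v|_T$ on $T$ and fix a face $F$ with unit normal $\mathbf n_F$. Split the gradient on $F$ as $\nabla w=(\partial_n w)\mathbf n_F+\nabla_F w$, where $\nabla_F$ is the tangential (surface) gradient on $F$.
\begin{itemize}
\item The normal part has zero integral over $F$, directly by the second set of degrees of freedom in \eqref{eq:dofMorley}.
\item For the tangential part, apply the divergence theorem on the $(d-1)$-dimensional flat face $F$, componentwise in a tangential direction $\tau$:
\begin{equation*}
\int_F \partial_\tau w\,ds=\sum_{e\in\mathcal F^{d-2}(F)} (\tau\cdot\boldsymbol\nu_{F,e})\int_e w\,dl ,
\end{equation*}
where $\boldsymbol\nu_{F,e}$ is the unit outward conormal of $e$ within $F$ and is constant on $e$. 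Each $e$ is a $(d-2)$-subsimplex of $T$, so $\int_e w\,dl=0$ by the first set of degrees of freedom.
\end{itemize}
Hence $\int_F\nabla w\,ds=0$. For $d=2$ the face is an edge, $e$ is a vertex, and the formula is simply the fundamental theorem of calculus, $\int_F\partial_\tau w=w(b)-w(a)$; there the ``integral over $e$'' means point evaluation, consistent with the vertex-value degrees of freedom.

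For the first identity, note that $D^2(\Pi_M v|_T)$ is constant on $T$. It therefore suffices to show $\int_T\partial_i\partial_j(\Pi_M v)\,dx=\int_T\partial_i\partial_j v\,dx$ for all $i,j$. Integrating by parts on $T$ gives
\begin{equation*}
\int_T\partial_i\partial_j w\,dx=\sum_{F\subset\partial T}(\mathbf n_T)_i|_F\int_F\partial_j w\,ds ,
\end{equation*}
and each face integral vanishes by the face identity. Thus $D_h^2\Pi_M v=P_T^0D^2v$.

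Regularity is not an issue: $v\in H^2(\Omega)$ gives traces of $\nabla v$ on faces and, through the trace theorem, traces of $v$ on $(d-2)$-subsimplices (for $d\le3$). All integrations by parts can be justified by density of smooth functions. The only step needing care is the tangential divergence theorem on $F$ for $d=3$. I would handle it by writing $\nabla_F w=\nabla w-(\partial_n w)\mathbf n_F$ and applying the standard divergence theorem in the plane of the triangle $F$ to $w\tau$ for each fixed tangential vector $\tau$.
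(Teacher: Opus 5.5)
Your proof is correct. The face identity $\int_F\nabla(v-\Pi_M v|_T)\,ds=0$ follows from the normal-derivative moments, together with the tangential divergence theorem on $F$ and the $(d-2)$-subsimplex moments, and it yields both commuting identities as you describe. The paper gives no proof of its own and only cites \cite{Brenner2015CR}; your argument is the standard integration-by-parts reasoning behind that result, written out for general $d$.
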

According to \eqref{Reg}, $\|u\|_m\lesssim\iota^{3/2-m}\|f\|_0,~m=2,3$, so that $\iota$-robust estimation relies on the fractional  interpolation theorem. The Sobolev embedding theorem implies that $\Pi_M$ is well defined on $H^s(\Omega)$ for $s > 3/2$. Following similar scaling argument in \cite{DupontScott1980}, we can obtain a fractional Sobolev estimate for $\Pi_M$.
\begin{mylem}
	Let $u\in H^{s}(\Omega),~3/2<s\leq 3.$  Then the following estimates hold. 
	\begin{align}
		&h^t\|u-\Pi_Mu\|_{H^t(\mathcal{T}_h)}\lesssim h^s\|u\|_{H^{s}(\mathcal{T}_h)},\\&\|\nabla u-\nabla_h\Pi_Mu\|_{L^2(\mathcal{T}_h)}=\|\nabla u-\Pi_{cr}\nabla u\|_{L^2(\mathcal{T}_h)}\lesssim h^{s-1}\|u\|_{H^s(\mathcal{T}_h)}. 
	\end{align}	
    Here $0\leq t\leq s$.
    \label{lem:interpolation1}
\end{mylem}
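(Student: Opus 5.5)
We prove Lemma~\ref{lem:interpolation1} by a standard local argument: reduce everything to a reference element, use that $\Pi_M$ reproduces $P_2$ together with a fractional Bramble--Hilbert lemma, and scale back, following \cite{DupontScott1980}. Fix $T\in\mathcal T_h$ and the affine map $F_T:\hat T\to T$. Since $s>3/2$, the trace theorem and the Sobolev embedding make the degrees of freedom \eqref{eq:dofMorley} bounded functionals on $H^s(\hat T)$. Indeed, $\int_{\hat F}\partial_n\hat u$ needs $\nabla\hat u\in H^{s-1}$ with $s-1>1/2$. The edge integrals $\int_{\hat e}\hat u$ over codimension-two faces need $s>1$, and point values in 2D need $s>1$, which holds. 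Hence $\hat\Pi_M:H^s(\hat T)\to P_2(\hat T)$ is bounded, and $\|\hat u-\hat\Pi_M\hat u\|_{t,\hat T}\lesssim\|\hat u\|_{s,\hat T}$ for every $0\le t\le s$. Because $\hat\Pi_M$ reproduces $P_2(\hat T)$, we may replace $\hat u$ by $\hat u-p$ for any $p\in P_{\lfloor s\rfloor'}\subset P_2$, where $\lfloor s\rfloor'$ is the largest integer strictly less than $s$ (at most $2$ since $s\le3$).

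The key step is the fractional Bramble--Hilbert (Deny--Lions) estimate $\inf_{p}\|\hat u-p\|_{s,\hat T}\lesssim|\hat u|_{s,\hat T}$, valid for noninteger $s$ with the Slobodeckij seminorm and for integer $s$ in the usual form. Combining it with the previous paragraph gives $\|\hat u-\hat\Pi_M\hat u\|_{t,\hat T}\lesssim|\hat u|_{s,\hat T}$. We then scale back. The integer seminorms scale as $|v|_{m,T}\approx h_T^{d/2-m}|\hat v|_{m,\hat T}$, and the Gagliardo seminorm with $|\alpha|=m$ scales as $|v|_{s,T}\approx h_T^{d/2-s}|\hat v|_{s,\hat T}$ by a change of variables in the double integral. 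We take $t$ integer (the broken $H^t$ norm for fractional $t$ follows by interpolation or by the same scaling). This yields $|u-\Pi_Mu|_{t,T}\lesssim h_T^{s-t}|u|_{s,T}$, so $h^t\|u-\Pi_Mu\|_{H^t(T)}\lesssim h^s\|u\|_{s,T}$ after summing over $0\le j\le t$ with $h_T\le h\lesssim 1$. Summing over $T$ then gives the first estimate. Note that the local Slobodeckij seminorms over the elements sum to at most the global one, so $\|u\|_{H^s(\mathcal T_h)}\le\|u\|_s$.

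For the second estimate, the commuting property \eqref{eq:commuting} gives $\nabla_h\Pi_Mu=\Pi_{cr}\nabla u$ directly. Alternatively, we apply the same argument to $\Pi_{cr}$ on $\nabla u\in H^{s-1}$ with $s-1>1/2$, since the face means are bounded by the trace theorem and $\Pi_{cr}$ reproduces $P_1$. Either way we get $\|\nabla u-\Pi_{cr}\nabla u\|_{0,T}\lesssim h_T^{s-1}|\nabla u|_{s-1,T}$, which is the claim.

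The main obstacle is the fractional Bramble--Hilbert step together with the boundedness of the face and edge functionals in the borderline range near $s=3/2$. Here we must check that the trace of $\nabla\hat u$ lies in $L^1(\hat F)$ uniformly, which holds since $H^{s-1}(\hat T)\to L^2(\partial\hat T)$ for $s-1>1/2$. The polynomial-preserving argument also needs care: for $3/2<s<2$ only $P_1$ is annihilated by the Bramble--Hilbert step, which is consistent with $\lfloor s\rfloor'=1$.
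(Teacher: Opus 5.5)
Your proposal is correct and follows the route the paper indicates: the Morley degrees of freedom are bounded on $H^s$ for $s>3/2$, $\Pi_M$ reproduces $P_2$, and the Dupont--Scott fractional Bramble--Hilbert lemma plus scaling give the first estimate, while the second is read off from the commuting property $\nabla_h\Pi_M u=\Pi_{cr}\nabla u$. One step deserves to be stated explicitly: you need $\widehat{\Pi_M u}=\hat\Pi_M\hat u$ under the affine map, even though the Morley element is not affine-equivalent (normal derivatives do not pull back to normal derivatives), and this holds because the degrees of freedom determine $\int_F\nabla u$ on each face, with the tangential part recovered from the edge (or vertex) values by integration by parts on $F$.
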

Since the Crouzeix--Raviart interpolation operator is not
well defined as a bounded operator on $H^{1/2}$, the limiting case
$s=3/2$ is unavailable in Lemma~\ref{lem:interpolation1}. However, for the exact solution $u\in H_0^2(\Omega)$ of
problem~\eqref{eq:LSG-cweak}, we have the following
parameter-robust interpolation estimate.
\begin{mylem}
	Let $u\in H^s(\Omega;\mathbb R^d)\cap H_0^2(\Omega;\mathbb R^d),~2\leq s\leq 3$ be the exact solution of problem \eqref{eq:LSG-cweak}, and $u$ satisfies \eqref{Reg-u0} and \eqref{Reg} then 
	\begin{align}
		&\|\sigma(u-\Pi_Mu)\|_{L^2(\mathcal{T}_h)}\lesssim	\min\{h^{1/2}\|f\|_0,h^{s-1}\|\sigma(u)\|_{s-1}\},\\
		&\iota\|\nabla_h\sigma(u-\Pi_Mu)\|_{L^2(\mathcal{T}_h)}\lesssim\min\{\iota^{1/2}\|f\|_0,\iota h^{s-2}\|\sigma(u)\|_{s-1}\}\label{eq:gradient-inter}. 
	\end{align}
	Here the hidden constant is independent of $\iota$ and $\lambda.$ But $\|\sigma(u)\|_{s-1}$ may depend on $\iota.$ 
	\label{lem:interpolation}
\end{mylem}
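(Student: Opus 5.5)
Each estimate is a minimum of two bounds, so I will prove the two branches separately. The $h^{s-1}$ and $h^{s-2}$ branches come from Lemma~\ref{lem:interpolation1}. The $h^{1/2}$ and $\iota^{1/2}$ branches come from the commuting property \eqref{eq:commuting} combined with the regularity assumptions \eqref{Reg-u0}--\eqref{Reg}. The delicate point is always the volumetric part $\lambda\operatorname{div}_h(u-\Pi_Mu)$, which has to be bounded without producing a factor of $\lambda$. The plan is to rewrite it through the commuting property as a projection error of $\operatorname{div}u$. Indeed $\nabla_h\Pi_Mu=\Pi_{cr}\nabla u$, so $\operatorname{div}_h\Pi_Mu=\operatorname{tr}\Pi_{cr}\nabla u=\Pi_{cr}\operatorname{div}u$, because $\Pi_{cr}$ acts componentwise and commutes with the trace. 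Hence
\[
\sigma(u-\Pi_Mu)=2\mu(\epsilon(u)-\Pi_{cr}\epsilon(u))+\lambda(\operatorname{div}u-\Pi_{cr}\operatorname{div}u)I.
\]
This is an interpolation error of $\sigma(u)$ itself under $\Pi_{cr}$. In the same way, $D_h^2\Pi_Mu=P_T^0D^2u$ gives $\nabla_h\sigma(\Pi_Mu)=P_T^0\nabla\sigma(u)$.

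\emph{Smooth branch.} For the $L^2$ estimate, the local Crouzeix--Raviart error bound for $w\in H^{r}(T)$ with $1\le r\le2$ gives $\|w-\Pi_{cr}w\|_{0,T}\lesssim h_T^{r}|w|_{r,T}$ (fractional $r$ via Dupont--Scott scaling, as in Lemma~\ref{lem:interpolation1}). Applied to $w=\sigma(u)\in H^{s-1}$ with $1\le s-1\le2$, this yields $h^{s-1}\|\sigma(u)\|_{s-1}$. For the gradient term, $\|\nabla\sigma(u)-P_T^0\nabla\sigma(u)\|_{0,T}\lesssim h_T^{s-2}|\nabla\sigma(u)|_{s-2,T}$ holds for $0\le s-2\le1$, since it is the error of the $L^2$ projection onto constants. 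Multiplying by $\iota$ gives the second branch.

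\emph{Robust branch for the gradient.} By $L^2$-stability of $P_T^0$,
\[
\iota\|\nabla_h\sigma(u-\Pi_Mu)\|\le 2\iota\|\nabla\sigma(u)\|_0\lesssim\iota(|u|_2+\lambda|\operatorname{div}u|_1)\lesssim\iota^{1/2}\|f\|_0
\]
by \eqref{Reg} with $i=1$.

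\emph{Robust branch for the $L^2$ estimate.} This is the main obstacle, since it needs $h^{1/2}$ uniformly in $\iota$ and $\lambda$. I would split $\sigma(u)=\sigma(u_0)+\sigma(u-u_0)$ and use linearity of $w\mapsto w-\Pi_{cr}w$.
\begin{itemize}
\item For the $u_0$ part, $\|\sigma(u_0)-\Pi_{cr}\sigma(u_0)\|\lesssim h\|\sigma(u_0)\|_1\lesssim h\|f\|_0$ by \eqref{eq:regularity-elastic}, and $h\lesssim h^{1/2}$.
\item For the difference $w=\sigma(u-u_0)$, $H^1$-stability of $\Pi_{cr}$ gives both $\|w-\Pi_{cr}w\|\lesssim h|w|_1$ and $\|w-\Pi_{cr}w\|\lesssim\|w\|_0$.
\end{itemize}
Interpolating between these two bounds, $\|w-\Pi_{cr}w\|\lesssim\min\{\|w\|_0,\,h|w|_1\}$. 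Here $\|w\|_0\lesssim\iota^{1/2}\|f\|_0$ by \eqref{Reg-u0}. Also $|w|_1\le|\sigma(u)|_1+|\sigma(u_0)|_1\lesssim\iota^{-1/2}\|f\|_0$ by \eqref{Reg} with $i=1$ and by \eqref{eq:regularity-elastic}, using $\iota\lesssim1$. Therefore $\min\{\iota^{1/2},h\iota^{-1/2}\}\|f\|_0\le h^{1/2}\|f\|_0$, since the minimum of $a$ and $h/a$ is at most $\sqrt h$.

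The point needing most care is the $H^1$-stability of $\Pi_{cr}$ applied to $w$, which only requires $w\in H^1$ with face averages. It holds here because $\sigma(u)$ and $\sigma(u_0)$ are both in $H^1$ under the stated regularity. With that in place, all hidden constants are independent of $\lambda$ and $\iota$.
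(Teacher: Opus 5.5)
Your smooth branches, the robust gradient bound via $L^2$-stability of $P_T^0$, the identity $\sigma(\Pi_Mu)=\Pi_{cr}\sigma(u)$, and the split $\sigma(u)=\sigma(u_0)+\sigma(u-u_0)$ are all fine. The robust $L^2$ branch, however, rests on a false step: the claim $\|w-\Pi_{cr}w\|_{L^2(\mathcal T_h)}\lesssim\|w\|_0$ for $w=\sigma(u-u_0)$.

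The operator $\Pi_{cr}$ is defined through face averages, and the $L^2$ norm does not control these. Its $H^1$-stability ($\nabla_h\Pi_{cr}w=P_T^0\nabla w$) gives no $L^2$-stability. The failure occurs exactly in the regime $\iota<h$ where you need the bound. Since $\partial_n u=0\neq\partial_n u_0$ on $\partial\Omega$, $w$ generically has $O(1)$ traces on boundary faces but is concentrated in a layer of width $\iota$. Hence $\|w\|_0\sim\iota^{1/2}$, whereas $\Pi_{cr}w=O(1)$ on the $O(h^{1-d})$ boundary elements, so $\|\Pi_{cr}w\|_0\sim h^{1/2}$.

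Your intermediate bound $\min\{\iota^{1/2},h\iota^{-1/2}\}\|f\|_0$ would therefore tend to $0$ as $\iota\to0$ with $h$ fixed, which is false. The final rate $h^{1/2}$ is correct, but your argument does not prove it. The ordinary additive trace inequality only gives $\|\Pi_{cr}w\|_0\lesssim\|w\|_0+h|w|_1$. The term $h|w|_1\sim h\iota^{-1/2}\|f\|_0$ is larger than $h^{1/2}\|f\|_0$ when $h>\iota$, so that route does not work either.

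The missing ingredient is the \emph{multiplicative} trace inequality. It uses the product $\|w\|_0|w|_1\lesssim\iota^{1/2}\iota^{-1/2}\|f\|_0^2=\|f\|_0^2$ rather than either factor alone. By scaling, $\|\Pi_{cr}w\|_{0,T}\lesssim h_T^{1/2}\sum_{F\subset\partial T}\|w\|_{0,F}$. The multiplicative trace inequality gives $h_T^{1/2}\|w\|_{0,F}\lesssim\|w\|_{0,T}+h_T^{1/2}\|w\|_{0,T}^{1/2}|w|_{1,T}^{1/2}$. Summing with Cauchy--Schwarz yields $\|\Pi_{cr}w\|_{L^2(\mathcal T_h)}^2\lesssim\|w\|_0^2+h\|w\|_0|w|_1\lesssim(\iota+h)\|f\|_0^2$. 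Hence $\|w-\Pi_{cr}w\|_{L^2(\mathcal T_h)}\lesssim(\iota^{1/2}+h^{1/2})\|f\|_0\lesssim h^{1/2}\|f\|_0$ when $h>\iota$.

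For $h\le\iota$, your bound $h|w|_1\lesssim h\iota^{-1/2}\|f\|_0\le h^{1/2}\|f\|_0$ suffices (or directly $h\|\sigma(u)\|_1$). With this replacement your case structure goes through, and it is then essentially the paper's proof.
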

\begin{proof}
By the commutativity property \eqref{eq:commuting},
\[
\|\sigma(u-\Pi_Mu)\|_{L^2(\mathcal{T}_h)} =  \|\sigma(u)-\Pi_{cr}\sigma(u)\|_{L^2(\mathcal{T}_h)}.
\]
When $h\leq \iota$, Lemma \ref{lem:interpolation1} implies
\begin{equation}
    \|\sigma(u-\Pi_Mu)\|_{L^2(\mathcal{T}_h)} \lesssim h^{s-1}\|\sigma(u)\|_{s-1}\lesssim h^{s-1}\iota^{3/2-s}\|f\|_0\leq h^{1/2}\|f\|_0.
\end{equation}
For $h>\iota$, 
\[
\begin{aligned}
\sigma(u)-\Pi_{cr}\sigma(u)
={}&\sigma(u-u_0)
+\sigma(u_0)-\Pi_{cr}\sigma(u_0)
-\Pi_{cr}\sigma(u-u_0).
\end{aligned}
\]
The first two terms satisfy
\[
\|\sigma(u-u_0)\|_0\lesssim \iota^{1/2}\|f\|_0,
\qquad
\|\sigma(u_0)-\Pi_{cr}\sigma(u_0)\|_{L^2(\mathcal{T}_h)} 
\lesssim h|\sigma(u_0)|_1
\lesssim h\|f\|_0.
\]
The definition of $\Pi_{cr}$ and a scaling argument give
\[
\begin{aligned}
\|\Pi_{cr}\sigma(u-u_0)\|_{0,T}
&\lesssim
h_T^{1/2}\sum_{F\subset\partial T}\|\sigma(u-u_0)\|_{0,F}.
\end{aligned}
\]
The multiplicative trace inequality
\cite[Lemma~12.15]{ErnGuermond2021} yields
\[
h_T^{1/2}\|\sigma(u-u_0)\|_{0,F}
\lesssim
\|\sigma(u-u_0)\|_{0,T}
+h_T^{1/2}\|\sigma(u-u_0)\|_{0,T}^{1/2}|\sigma(u-u_0)|_{1,T}^{1/2}.
\]
Consequently,
\[
\|\Pi_{cr}\sigma(u-u_0)\|_{L^2(\mathcal{T}_h)}^2 
\lesssim
\|\sigma(u-u_0)\|_0^2+h\|\sigma(u-u_0)\|_0|\sigma(u-u_0)|_1.
\]
It follows from \eqref{Reg-u0} and \eqref{Reg} that
\[
\|\sigma(u-u_0)\|_0\lesssim \iota^{1/2}\|f\|_0,
\qquad
|\sigma(u-u_0)|_1
\lesssim \iota^{-1/2}\|f\|_0.
\]
Therefore,
\[
\|\Pi_{cr}\sigma(u-u_0)\|_{L^2(\mathcal{T}_h)} 
\lesssim
\left(\iota^{1/2}+h^{1/2}\right)\|f\|_0
\lesssim h^{1/2}\|f\|_0,
\]
where we have used $h>\iota$. Combining the preceding estimates
(and taking $h\leq1$) gives
\[
\|\sigma(u)-\Pi_{cr}\sigma(u)\|_{L^2(\mathcal{T}_h)} 
\lesssim h^{1/2}\|f\|_0.
\]
Finally, the Hessian-commuting property gives
\[
\nabla_h\sigma(u-\Pi_Mu)
=(I-P_T^0)\nabla\sigma(u). 
\]
Hence, by the $L^2$-stability and approximation property of
$P_T^0$, 
\[
\begin{aligned}
\iota\|\nabla_h\sigma(u-\Pi_Mu)\|_{L^2(\mathcal{T}_h)} 
&\lesssim
\min\left\{
\iota^{1/2}\|f\|_0,\,
\iota h^{s-2}\|\sigma(u)\|_{s-1}
\right\}.
\end{aligned}
\]
\end{proof}
Above lemma shows that boundary layer may cause the convergence rate almost up to half order when $\iota$ becomes small. And for smooth enough case, small $\iota$ leads full convergence rate for interpolation error. 
\section{Error analysis}
The error analysis of the proposed nonconforming method involves
both interpolation and consistency errors. The interpolation
estimates have already been established in the previous section. In this section,
we focus on estimating the consistency error.
Define the following mesh-dependent norm:
\begin{equation}
	\|v\|_{*,h}^2
	:=
	\interleave v\interleave_{\iota,h}^2+s(v,v),
	\qquad
	v\in H_0^2(\Omega;\mathbb R^d)+V_M^0.
	\label{eq:star-norm}
\end{equation}
The preceding interpolation estimates yield the following result.
\begin{mylem}
	\label{lem:star-norm-interpolation}
	Let $2\leq s\leq3$, and let
	\[
		u\in H_0^2(\Omega;\mathbb R^d)
		\cap H^s(\Omega;\mathbb R^d)
	\]
	be the solution of \eqref{eq:LSG-cweak}. Assume that
	\eqref{Reg-u0}--\eqref{Reg} and \eqref{eq:regularity-elastic} hold. Then
	\begin{equation}
		\|u-\Pi_Mu\|_{*,h}
		\lesssim
		\min\left\{
		h^{1/2}\|f\|_{0},
		\left(h^{s-1}+\iota h^{s-2}\right)
		\|\sigma(u)\|_{s-1}
		\right\}.
		\label{eq:star-norm-interpolation}
	\end{equation}
	The hidden constant is independent of $h$, $\iota$, and $\lambda$.
\end{mylem}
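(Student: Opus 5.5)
The plan is to split $\|u-\Pi_Mu\|_{*,h}^2$ into its three constituent pieces and bound each by Lemma~\ref{lem:interpolation} (or the elementary estimates behind it). Write $e:=u-\Pi_Mu$. Then
\[
\|e\|_{*,h}^2=\iota^2|e|_{H^2(\mathcal T_h)}^2+|e|_{H^1(\mathcal T_h)}^2+\lambda^2\sum_{F\in\mathcal F_h^i}h_F\|[P_T^0\operatorname{div}_h e]\|_{0,F}^2 .
\]

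\textbf{The two broken seminorms.} By the commuting property \eqref{eq:commuting}, $\nabla_h e=\nabla u-\Pi_{cr}\nabla u$ and $D_h^2e=(I-P_T^0)D^2u$. These are exactly the quantities treated in the proof of Lemma~\ref{lem:interpolation}, but with $\nabla u$ in place of $\sigma(u)$.

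The first step is to repeat that proof verbatim with $\nabla u$, which uses \eqref{Reg-u0} for $|u-u_0|_1$, \eqref{Reg} for $|u|_2,|u|_3$, and $|u_0|_2\lesssim\|f\|_0$. This gives
\[
|e|_{H^1(\mathcal T_h)}\lesssim\min\{h^{1/2}\|f\|_0,\;h^{s-1}|u|_s\},\qquad
\iota|e|_{H^2(\mathcal T_h)}\lesssim\min\{\iota^{1/2}\|f\|_0,\;\iota h^{s-2}|u|_s\}.
\]
The bound $|u_0|_2\lesssim\|f\|_0$ holds because $2\mu\|\epsilon(u_0)\|_1\le\|\sigma(u_0)\|_1+\lambda\|\operatorname{div}u_0\|_1$, and $\lambda$-uniform $H^2$ regularity follows from \eqref{eq:regularity-elastic} together with the standard bound on $\lambda\operatorname{div}u_0$. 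Alternatively, one can bound $|e|_1$ through $\|\epsilon_h(e)\|$: since $2\mu\epsilon_h(e)=\sigma(e)-\lambda\operatorname{div}_h e\,I$, this is controlled by $\|\sigma(e)\|$ together with the corresponding trace estimate.

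To convert $|u|_s$ into $\|\sigma(u)\|_{s-1}$, I use $|u|_s\lesssim\|\epsilon(u)\|_{s-1}\lesssim\|\sigma(u)\|_{s-1}$. This relies on Korn's inequality for $H_0^1$ and on $\|\epsilon\|\le\frac1{2\mu}(\|\sigma\|+\lambda\|\operatorname{div}\|)$, and $\lambda\|\operatorname{div}u\|_{s-1}\lesssim\|\sigma(u)\|_{s-1}$ because the trace of $\sigma$ equals $(2\mu+d\lambda)\operatorname{div}u$.

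For the case $\iota<h$ I need $\iota^{1/2}\le h^{1/2}$. For $\iota\ge h$ the $\iota$-term $\iota^{1/2}\|f\|_0$ is not bounded by $h^{1/2}$; instead I would use $\iota h^{s-2}|u|_s$ with $s=3$, $|u|_3\lesssim\iota^{-3/2}\|f\|_0$, which gives $h\iota^{-1/2}\le h^{1/2}$. This mirrors the case distinction $h\le\iota$ versus $h>\iota$ used for the first term in Lemma~\ref{lem:interpolation}.

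\textbf{The stabilization term (main obstacle).} This is the term that must be uniform in $\lambda$. Since $\operatorname{div}_h e=\frac{1}{2\mu+d\lambda}\operatorname{tr}\sigma(e)$, and on each $T$ the constant $P_T^0\operatorname{div}_h e=(\operatorname{div}u-\operatorname{tr}\Pi_{cr}\nabla u)(x_T)$ equals the average of $\operatorname{div}u$ minus the CR-interpolant value, I have
\[
\lambda\,|P_T^0\operatorname{div}_h e|\lesssim|\operatorname{tr}P_T^0(\sigma(u)-\Pi_{cr}\sigma(u))|.
\]
The jump on $F$ of piecewise constants is bounded by the values on both sides, and a scaling argument gives
\[
h_F\|c_T\|_{0,F}^2\lesssim\|c_T\|_{0,T}^2 .
\]
Together these give
\[
s(e,e)\lesssim\sum_T\|P_T^0(\sigma(u)-\Pi_{cr}\sigma(u))\|_{0,T}^2\le\|\sigma(u)-\Pi_{cr}\sigma(u)\|_{L^2(\mathcal T_h)}^2 ,
\]
which is $\|\sigma(e)\|_{L^2(\mathcal T_h)}^2$. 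Lemma~\ref{lem:interpolation} then bounds it by $\min\{h\|f\|_0^2,\,h^{2s-2}\|\sigma(u)\|_{s-1}^2\}$.

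The delicate point is verifying $P_T^0\operatorname{div}_h\Pi_Mu=P_T^0\operatorname{tr}\Pi_{cr}\nabla u$ with the right constants, and checking that $\lambda$ appears only through $\sigma$, so that no factor $\lambda/\mu$ survives. Summing the three bounds and taking square roots yields \eqref{eq:star-norm-interpolation}.
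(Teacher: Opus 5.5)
Your proposal is correct and follows the route the paper intends: the paper gives no separate proof, only the remark that the estimate follows from the preceding interpolation results (the commuting property \eqref{eq:commuting} together with Lemmas~\ref{lem:interpolation1} and~\ref{lem:interpolation}), and you assemble exactly these ingredients term by term. Your version also spells out three points that remark glosses over, with $e=u-\Pi_Mu$. First, Lemma~\ref{lem:interpolation} controls $\sigma(e)$ and $\iota\nabla_h\sigma(e)$ rather than $\nabla_h e$ and $\iota D_h^2 e$, which is why you rerun its proof with $\nabla u$. Second, \eqref{eq:gradient-inter} carries $\iota^{1/2}\|f\|_0$, so reaching the $h^{1/2}\|f\|_0$ branch needs your split at $\iota=h$, using $|u|_3\lesssim\iota^{-3/2}\|f\|_0$ when $\iota\ge h$. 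Third, the pointwise bound $\lambda^2|P_T^0\operatorname{div}_h e|^2\le d^{-1}|P_T^0\sigma(e)|^2$ together with a scaled trace estimate gives $s(e,e)\lesssim\|\sigma(e)\|_{L^2(\mathcal T_h)}^2$ uniformly in $\lambda$.
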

To derive an error estimate under the minimal regularity
assumption, let $P_{\omega_F}^0$ denote the $L^2$-orthogonal
projection onto the space of constants on $\omega_F$,
where $\omega_F$ is the patch consisting of the two elements
sharing $F$.
\begin{mylem}[Consistency]
	\label{lem:consistency}
	Let $u\in H_0^2(\Omega;\mathbb R^d)$ be the exact solution of
	\eqref{eq:LSG-cweak}. Then, for any $w_h\in V_M^0$,
	\begin{equation}
		\begin{aligned}
		\bigl|(f,w_h)_{\mathcal{T}_h}-\mathcal A(u,w_h)\bigr| 
		\lesssim{}&
		\eta_{1,h}(u)
		\interleave w_h\interleave_{\iota,h}
		+
		\eta_{2,h}(u)\,s(w_h,w_h)^{1/2},
		\end{aligned}
		\label{eq:consistency-estimate}
	\end{equation}
	where
	\begin{align}
		\eta_{1,h}(u):={}&
		h\|f\|_{0} 
		+\|\sigma(u)-P_T^0\sigma(u)\|_{L^2(\mathcal{T}_h)}		+ 
		\iota
		\|\nabla\sigma(u)
		-P_T^0\nabla\sigma(u)\|_{L^2(\mathcal{T}_h)} 
		\notag\\
		&+
		\left(
		\sum_{F\in\mathcal F_h^i}
		\|\sigma(u)-P_{\omega_F}^0\sigma(u)\|_{0,\omega_F}^2 
		\right)^{1/2}
				+
		\iota
		\left(
		\sum_{F\in\mathcal F_h^i}
		\|\nabla\sigma(u)
		-P_{\omega_F}^0\nabla\sigma(u)\|_{0,\omega_F}^2 
		\right)^{1/2},
        \label{eq:eta-zero}
\end{align}
and
\begin{align}
		\eta_{2,h}(u):={}&
		\lambda
		\left(
		\sum_{F\in\mathcal F_h^i}
		h_F
		\bigl\|
		[P_T^0\operatorname{div}u 
		-\operatorname{div}_h\Pi_Mu]
		\bigr\|_{0,F}^2
		\right)^{1/2}.
		\label{eq:eta-stab}
	\end{align}
\end{mylem}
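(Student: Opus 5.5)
The plan is to avoid any elementwise integration by parts on the exact solution, because $u$ is only assumed to lie in $H_0^2(\Omega;\mathbb R^d)$, so $\nabla\sigma(u)$ has no trace on faces. Following \cite{HuZhangSPP}, I will instead use the conforming companion $I_A w_h\in H_0^2(\Omega;\mathbb R^d)$. It is a legitimate test function in \eqref{eq:LSG-cweak}, so $(f,I_Aw_h)=a_\iota(u,I_Aw_h)$.

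Since $u$ is continuous and vanishes on $\partial\Omega$, we have $\llbracket u\rrbracket=0$ on all faces, and hence $J(w_h,u)=0$. This gives the splitting
\[
(f,w_h)-\mathcal A(u,w_h)=(f,w_h-I_Aw_h)+a_\iota(u,I_Aw_h-w_h)+J(u,w_h)-s(u,w_h).
\]
The first term is bounded by $h\|f\|_0|w_h|_{H^1(\mathcal T_h)}$, using Lemma~\ref{Lem:Enrich-error} with $j=0$, $i=1$.

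\emph{Second-order part.} I will write
\[
(\sigma(u),\epsilon_h(I_Aw_h-w_h))=(\sigma(u)-P_T^0\sigma(u),\epsilon_h(I_Aw_h-w_h))+(P_T^0\sigma(u),\epsilon_h(I_Aw_h-w_h)).
\]
The first piece is bounded by $\|\sigma(u)-P_T^0\sigma(u)\|\,|w_h|_{H^1(\mathcal T_h)}$, by Lemma~\ref{Lem:Enrich-error}. In the second piece $P_T^0\sigma(u)$ is constant on each element, so elementwise integration by parts is legitimate. Using the identity $\sum_T\int_{\partial T}\tau\mathbf n_T\cdot v=(\{\tau\},\llbracket v\rrbracket)_{\mathcal F_h}+([\tau],\{v\})_{\mathcal F_h^i}$, and the continuity and zero boundary values of $I_Aw_h$, the result is
\[
-(\{P_T^0\sigma(u)\},\llbracket w_h\rrbracket)_{\mathcal F_h}-([P_T^0\sigma(u)],\{w_h-I_Aw_h\})_{\mathcal F_h^i}.
\]
The first of these exactly cancels $J(u,w_h)$, which is the reason for projecting $\sigma$ inside $J$.

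For the remainder, note that $[P_T^0\sigma(u)]=[P_T^0\sigma(u)-P^0_{\omega_F}\sigma(u)]$. This quantity is piecewise constant, so the inverse-trace estimate gives $\|[\cdot]\|_{0,F}\lesssim h_F^{-1/2}\|\sigma(u)-P^0_{\omega_F}\sigma(u)\|_{0,\omega_F}$, using the $L^2$ stability of $P_T^0$. The trace inequality together with Lemma~\ref{Lem:Enrich-error} gives $\|w_h-I_Aw_h\|_{0,F}\lesssim h_F^{1/2}|w_h|_{H^1(\omega_F)}$. Multiplying these produces the $\omega_F$-term of $\eta_{1,h}$.

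\emph{Fourth-order part.} I will treat it in the same way, with $\nabla\sigma(u)$ in place of $\sigma(u)$ and $\nabla\epsilon$ in place of $\epsilon$. Elementwise integration by parts of $(P_T^0\nabla\sigma(u),\nabla_h\epsilon_h(I_Aw_h-w_h))$ produces two face terms:
\begin{itemize}
\item an average term $(\{P_T^0\nabla\sigma(u)\},\llbracket\epsilon_h(w_h)\rrbracket)$, which vanishes because the constant is tested against $\int_F[\nabla_hw_h]=0$ (the Morley weak continuity, including on boundary faces);
\item a jump term $([P_T^0\nabla\sigma(u)],\{\epsilon_h(w_h-I_Aw_h)\})$.
\end{itemize}
The jump term is bounded as above, now using $\|\nabla_h(w_h-I_Aw_h)\|_{0,F}\lesssim h_F^{1/2}|w_h|_{H^2(\omega_F)}$. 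The $\iota^2$ factor is split as $\iota\cdot\iota$, giving $\iota\|\nabla\sigma-P^0_{\omega_F}\nabla\sigma\|\cdot\iota|w_h|_{H^2}$. The volume piece gives $\iota\|\nabla\sigma(u)-P_T^0\nabla\sigma(u)\|\cdot\iota|w_h|_{H^2(\mathcal T_h)}$.

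\emph{Stabilization term.} By Cauchy--Schwarz, $|s(u,w_h)|\le\lambda\bigl(\sum_F h_F\|[P_T^0\operatorname{div}u]\|_{0,F}^2\bigr)^{1/2}s(w_h,w_h)^{1/2}$. To reach $\eta_{2,h}$, I will use that $[P_T^0\operatorname{div}u]$ is constant on $F$, so its $L^2(F)$ norm equals that of its face mean. By the commuting property \eqref{eq:commuting}, $\operatorname{div}_h\Pi_Mu$ is the trace of a Crouzeix--Raviart field, so $\int_F[\operatorname{div}_h\Pi_Mu]=0$. Hence the face mean of $[P_T^0\operatorname{div}u]$ equals that of $[P_T^0\operatorname{div}u-\operatorname{div}_h\Pi_Mu]$, and the $L^2(F)$ norm of a mean is bounded by the $L^2(F)$ norm of the function.

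The main obstacle is the face bookkeeping in the integration by parts. Two things must be checked carefully: that the average terms cancel exactly against $J(u,w_h)$ in the second-order part and vanish by weak continuity in the fourth-order part, and that on boundary faces the jump conventions combined with $I_Aw_h\in H_0^2$ leave no residual term.
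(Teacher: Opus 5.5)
Your proof is correct and follows the paper's argument. It uses the same companion-operator splitting with $z_h=w_h-I_Aw_h$ and the same projection onto $P_T^0$ before elementwise integration by parts; the paper writes $P_T^0\nabla\sigma(u)$ as $\nabla_h\sigma(\Pi_Mu)$ via the commuting property, which is the same object. It also uses the same Morley weak continuity and $P^0_{\omega_F}$ argument for the face terms, and the same Crouzeix--Raviart face-mean argument for $s(u,w_h)$, with only the cosmetic difference that you apply Cauchy--Schwarz before inserting $\operatorname{div}_h\Pi_Mu$ rather than after.
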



\begin{proof}
	Since $I_Aw_h\in H_0^2(\Omega)$, we have
	$$
a_{\iota}(u,A_hw_h)=	\iota^2 (\nabla \sigma( u), \nabla\epsilon( A_hw_h))_{\mathcal{T}_h}+  (\sigma( u), \epsilon( A_hw_h))_{\mathcal{T}_h}=(f,A_hw_h)_0. 
	$$
This implies
\begin{equation}
(f,w_h)_{\mathcal{T}_h}-\mathcal A(u,w_h) 
=(f,z_h)_{\mathcal{T}_h}-a_\iota(u,z_h) 
   +J(u,z_h)-s(u,w_h),
\label{eq:consistency-identity}
\end{equation}
where $z_h=w_h-I_Aw_h.$
Then 
\begin{align*}
	a_\iota(u,z_h)
   -J(u,z_h)+s(u,w_h)&=  \underbrace{(\sigma( u), \epsilon_h(z_h))_{\mathcal{T}_h} 
-( \{P_T^0\sigma( u)\},  \llbracket z_h\rrbracket)_{\mathcal{F}_h}}_{(I)}\\&+	\underbrace{\iota^2 (\nabla \sigma( u), \nabla_h\epsilon_h(z_h))_{\mathcal{T}_h}}_{(II)}+ \underbrace{ \lambda^2(h_F[P_T^0\mathrm{div} u],[P_T^0\operatorname{div}_h w_h])_{\mathcal{F}^i_h}}_{(III)}. 
 \end{align*}
To avoid estimates involving higher order norms that may grow as $\iota\rightarrow 0$, we first project the stress onto elementwise constants and then integrate the projected term by parts.
 \begin{align*} 
 (I) &= \underbrace{(P_T^0\sigma(u), \epsilon_h(w_h- I_Aw_h))_{\mathcal{T}_h} 
 -( \{P_T^0\sigma( u)\},  \llbracket w_h-I_Aw_h\rrbracket)_{\mathcal{F}_h}}_{(I')}\\&+(\sigma( u)-P_T^0\sigma( u), \epsilon_h(w_h- I_Aw_h))_{\mathcal{T}_h} . 
 \end{align*}
Using integration by parts, trace and inverse inequalities for $(I')$, we have
\begin{align*}
	(I')=&([P_T^0\sigma(u)],\{w_h-I_Aw_h\})_{\mathcal{F}^i_h}=([P_T^0\sigma(u)-P_{\omega_F}^0\sigma(u)],\{w_h-I_Aw_h\})_{\mathcal{F}^i_h}\\&\lesssim \left(\sum_{F\in\mathcal{F}^i_h}\|P_T^0\sigma(u)-P_{\omega_F}^0\sigma(u)\|_{0,\omega_F}^2\right)^{1/2}|w_h|_{H^1(\mathcal{T}_h)}, 
\end{align*}
The second term is estimated as follows. 
\begin{align*}
	(II) &= \iota^2 (\nabla_h \sigma( u-\Pi_Mu), \nabla_h\epsilon_h(w_h- I_Aw_h))_{\mathcal{T}_h} + \iota^2 (\nabla_h \sigma( \Pi_Mu), \nabla_h\epsilon_h(w_h- I_Aw_h))_{\mathcal{T}_h}\\ 
\lesssim&(\iota\|\nabla\sigma(u)-P_T^0\nabla\sigma(u)\|_{L^2(\mathcal{T}_h)}   )\interleave  w_h\interleave _{\iota,h}   + \iota^2\sum_{T \in \mathcal{T}_{h}}\int_{\partial T}\nabla_h\sigma(\Pi_Mu)\pmb n_T:\epsilon_h(w_h-I_Aw_h) \mathrm{d}s. 
\end{align*}
 Then by the weak continuity property of Morley element, we have
\begin{align*}
&	\sum_{T \in \mathcal{T}_{h}}\int_{\partial T}\nabla_h\sigma(\Pi_Mu)\pmb n_T:\epsilon_h(w_h-I_Aw_h) \mathrm{d}s \\ 
&= (\{\nabla_h\sigma(\Pi_Mu)\}-P_F^0\{\nabla_h\sigma(\Pi_Mu)\},\llbracket\epsilon_h(w_h-I_Aw_h)\rrbracket)_{\mathcal{F}_h}\\&+([\nabla_h\sigma(\Pi_Mu)-P_{\omega_F}^0\nabla\sigma(u)],\{\epsilon_h(w_h-I_Aw_h)\})_{\mathcal{F}^i_h}. 
\end{align*} 
Here $P_F^0$ is the constant $L^2$ projection on $F$. Trace inequality leads that
\[
(II)
\lesssim
\iota\left(
\|\nabla\sigma(u)-P_T^0\nabla\sigma(u)\|_{L^2(\mathcal T_h)}
+
\left(
\sum_{F\in\mathcal F_h^i}
\|\nabla\sigma(u)-P_{\omega_F}^0\nabla\sigma(u)\|_{0,\omega_F}^2
\right)^{1/2}
\right)
\interleave w_h\interleave_{\iota,h}.
\]
For the last term, note that $[P_T^0\operatorname{div}_h w_h]|_F$ is a constant, we have  
\begin{align*}
(III)
={}&
\lambda^2
\sum_{F\in\mathcal F_h^i}h_F
\left(
[P_T^0\operatorname{div}u 
-\operatorname{div}_h\Pi_Mu],
[P_T^0\operatorname{div}_h w_h] 
\right)_{0,F} 
\\
&\leq
\lambda\left(
\sum_{F\in\mathcal F_h^i}
h_F
\|[P_T^0\operatorname{div}u
-\operatorname{div}_h\Pi_Mu]\|_{0,F}^2
\right)^{1/2}
s(w_h,w_h)^{1/2}.
\end{align*}
Combining $
|(f,z_h)_{\mathcal{T}_h}|\lesssim h\|f\|_0|w_h|_{H^1(\mathcal{T}_h)} 
,$ we finish the proof. 
\end{proof}
\begin{remark}
	Let
	$u\in H^s(\Omega;\mathbb R^d)\cap H_0^2(\Omega;\mathbb R^d)$,
	with $2\leq s\leq 3$, be the exact solution of \eqref{eq:LSG-cweak}. Then
	\begin{equation}
		\sup_{0\neq w_h\in V_M^0}
		\frac{
			\bigl|(f,w_h)_{\mathcal{T}_h}-\mathcal A(u,w_h)\bigr| 
		}{
			\|w_h\|_{*,h} 
		}
		\lesssim
		h\|f\|_{0} 
		+\bigl(h+\iota h^{s-2}\bigr)
		\|\sigma(u)\|_{s-1}, 
		\label{eq:consistency-regular-bound}
	\end{equation}
Moreover, \eqref{Reg-u0} and \eqref{Reg} imply
	\begin{equation}
		\sup_{0\neq w_h\in V_M^0}
		\frac{
			\bigl|(f,w_h)_{\mathcal{T}_h}-\mathcal A(u,w_h)\bigr| 
		}{
			\|w_h\|_{*,h} 
		}
		\lesssim
		\min\left\{
			h^{1/2}\|f\|_{0},\, 
			h\|f\|_{0} 
			+\bigl(h+\iota h^{s-2}\bigr)
			\|\sigma(u)\|_{s-1} 
		\right\}.
		\label{eq:consistency-robust-bound}
	\end{equation}
	The hidden constants are independent of $h$, $\iota$, and $\lambda$.
\end{remark}
\begin{mylem}[The second Strang lemma]
	\label{lem:second-strang}
	Let $u\in H_0^2(\Omega;\mathbb R^d)$ be the exact solution of
	\eqref{eq:LSG-cweak}. Then the discrete problem
	\eqref{eq:LSG-weak} admits a unique solution $u_h\in V_M^0$,
    	such that
	\begin{align}
		\|u-u_h\|_{*,h}
		\lesssim{}&
		\|u-\Pi_Mu\|_{*,h}
		+\eta_{3,h}(u)
		+
		\sup_{0\neq v_h\in V_M^0}
		\frac{
		|(f,v_h)_{\mathcal{T}_h}-\mathcal A(u,v_h)| 
		}{
		\|v_h\|_{*,h}
		}.
		\label{eq:second-strang}
	\end{align}
Here
	\begin{align}
		\eta_{3,h}(u)=
		\|\sigma(u)-\Pi_{cr}\sigma(u)\|_{L^2(\mathcal{T}_h)} 
		+		(
		\sum_{F\in\mathcal F_h^i}
		h_F^{-1}\|\{e_h\}\|_{0,F}^2
		)^{1/2}.
		\label{eq:eta-interpolation}
	\end{align}
where $e_h:=\Pi_Mu-u.$
\end{mylem}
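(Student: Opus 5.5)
Existence and uniqueness come first. Lemma~\ref{lem:covercivity} states $\|v_h\|_{*,h}^2\lesssim\mathcal A(v_h,v_h)$ on $V_M^0$, so the square linear system \eqref{eq:LSG-weak} has trivial kernel and a unique solution $u_h$. For the error bound, split $u-u_h=(u-\Pi_Mu)+(\Pi_Mu-u_h)$ and set $\xi_h:=\Pi_Mu-u_h\in V_M^0$. By the triangle inequality it suffices to bound $\|\xi_h\|_{*,h}$. Coercivity and the discrete equation give
\[
\|\xi_h\|_{*,h}^2\lesssim \mathcal A(\xi_h,\xi_h)
=\mathcal A(\Pi_Mu-u,\xi_h)+\bigl(\mathcal A(u,\xi_h)-(f,\xi_h)_{\mathcal T_h}\bigr).
\]
The second term is at most the supremum in \eqref{eq:second-strang} times $\|\xi_h\|_{*,h}$. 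The whole task therefore reduces to a boundedness estimate
\[
|\mathcal A(e_h,\xi_h)|\lesssim\bigl(\|e_h\|_{*,h}+\eta_{3,h}(u)\bigr)\|\xi_h\|_{*,h},\qquad e_h=\Pi_Mu-u .
\]
This estimate must be uniform in $\lambda$.

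I would prove the boundedness term by term.

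\textbf{The $a_\iota$ part.} The $\iota^2$ term is bounded by Cauchy--Schwarz, using $\iota\|\nabla_h\sigma(e_h)\|$ and the norm $\interleave\cdot\interleave_{\iota,h}$. For the second-order term I would not bound $\sigma(e_h)$ by $\lambda|e_h|_1$, since that is not robust. Instead I would write $(\sigma(e_h),\epsilon_h(\xi_h))$ and use the commuting property \eqref{eq:commuting}, which gives $\sigma(e_h)=\Pi_{cr}\sigma(u)-\sigma(u)$ (up to sign). This yields the bound $\|\sigma(u)-\Pi_{cr}\sigma(u)\|\,|\xi_h|_{H^1(\mathcal T_h)}$, which is the first part of $\eta_{3,h}$.

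\textbf{The stabilization $s(e_h,\xi_h)$.} This is bounded by $s(e_h,e_h)^{1/2}s(\xi_h,\xi_h)^{1/2}$, and $s(e_h,e_h)\le\|e_h\|_{*,h}^2$ by definition of the norm.

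\textbf{The term $J(e_h,\xi_h)=(\{P_T^0\sigma(e_h)\},\llbracket\xi_h\rrbracket)_{\mathcal F_h}$.} Since $P_T^0\sigma(e_h)$ is piecewise constant, I would apply the trace/inverse inequality $\|\{P_T^0\sigma(e_h)\}\|_{0,F}\lesssim h_F^{-1/2}\|P_T^0\sigma(e_h)\|_{0,\omega_F}$. Together with $h_F^{-1/2}\|\llbracket\xi_h\rrbracket\|_{0,F}\lesssim|\xi_h|_{H^1(\omega_F)}$, this bounds $J(e_h,\xi_h)$ by $\|\sigma(u)-\Pi_{cr}\sigma(u)\|\,|\xi_h|_{H^1(\mathcal T_h)}$. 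The jump estimate holds because the Morley face-mean jump of $\nabla\xi_h$ vanishes and the $(d-2)$-face means of $\xi_h$ are continuous, so $\llbracket\xi_h\rrbracket$ has zero mean on $F$ in a Poincaré sense; equivalently it can be checked against $I_A\xi_h$ using Lemma~\ref{Lem:Enrich-error}.

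\textbf{The term $J(\xi_h,e_h)=(\{P_T^0\sigma(\xi_h)\},\llbracket e_h\rrbracket)_{\mathcal F_h}$.} This is the main obstacle, because its $\lambda P_T^0\operatorname{div}_h\xi_h$ part is not controlled by $\|\xi_h\|_{*,h}$ without $\lambda$-dependence. I would split it into the $\mu$-part and the $\lambda$-part.
\begin{itemize}
\item The $\mu$-part is bounded by $\bigl(\sum_F h_F^{-1}\|\{e_h\}\|_{0,F}^2+\|e_h\|_{*,h}^2\bigr)^{1/2}|\xi_h|_{H^1(\mathcal T_h)}$, using an inverse inequality on the piecewise constant and a trace inequality for $\llbracket e_h\rrbracket$.
\item For the $\lambda$-part I would rewrite the face sum with the identity $\{a\}\llbracket b\rrbracket$-type decomposition on interior faces, $\langle\{q\},[b]\rangle+\langle[q],\{b\}\rangle=\sum_T\langle q,b\cdot n_T\rangle$, with $q=\lambda P_T^0\operatorname{div}_h\xi_h$ piecewise constant. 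The elementwise boundary sum equals $\sum_T(q,\operatorname{div}e_h)_T$, and this vanishes: the commuting property gives $\nabla_h\Pi_Mu=\Pi_{cr}\nabla u$, so $\operatorname{div}_h e_h$ has zero mean on each $T$. The boundary faces are handled the same way, since $u=0$ there. What remains is $-\lambda\sum_{F\in\mathcal F_h^i}\langle[P_T^0\operatorname{div}_h\xi_h],\{e_h\}\rangle_F$. This is bounded by $s(\xi_h,\xi_h)^{1/2}\bigl(\sum_F h_F^{-1}\|\{e_h\}\|_{0,F}^2\bigr)^{1/2}$, which is exactly the second part of $\eta_{3,h}$.
\end{itemize}
Combining all the pieces and dividing by $\|\xi_h\|_{*,h}$ gives \eqref{eq:second-strang}.
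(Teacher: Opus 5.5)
Your architecture matches the paper's. You use coercivity plus the triangle inequality, reduce to a $\lambda$-robust bound for $\mathcal A(e_h,\xi_h)$, apply Cauchy--Schwarz to $s(e_h,\xi_h)$, use an inverse/trace argument for $J(e_h,\xi_h)$, and integrate $J(\xi_h,e_h)$ by parts elementwise so that its $\lambda$-part meets $\{e_h\}$ and is absorbed by $s(\xi_h,\xi_h)^{1/2}$. However, two steps fail as written, both exactly where $\lambda$-robustness is at stake.

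\textbf{The $\iota^2$ term.} Bounding it by $\iota\|\nabla_h\sigma(e_h)\|_{L^2(\mathcal T_h)}\interleave\xi_h\interleave_{\iota,h}$ leaves the quantity $\lambda\iota|\operatorname{div}_h e_h|_{H^1(\mathcal T_h)}$. This is controlled neither by $\|e_h\|_{*,h}$, whose second-order part $\iota|e_h|_{H^2(\mathcal T_h)}$ carries no $\lambda$, nor by $\eta_{3,h}(u)$. The paper instead uses the Hessian commuting property $D_h^2\Pi_Mu=P_T^0D^2u$. Then $\nabla_h\sigma(e_h)=-(I-P_T^0)\nabla\sigma(u)$ has zero mean on every $T$, while $\nabla_h\epsilon_h(\xi_h)$ is elementwise constant, so the term vanishes identically.

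\textbf{The volume term in $J(\xi_h,e_h)$.} Your claim that $\operatorname{div}_h e_h$ has zero mean on each element is false. The identity $\nabla_h\Pi_Mu=\Pi_{cr}\nabla u$ preserves face means of $\nabla u$, not element means. For $p\in P_1(T)$ one has $P_T^0p=\frac1{d+1}\sum_{F\subset\partial T}P_F^0p$, so $P_T^0\Pi_{cr}v=\frac1{d+1}\sum_{F\subset\partial T}P_F^0v$, which in general differs from $P_T^0v$. For example, on the reference triangle with $u=(x_1^3,0)$ you get $\int_T\operatorname{div}\Pi_Mu=\int_T\Pi_{cr}(3x_1^2)=\tfrac13$, whereas $\int_T\operatorname{div}u=\tfrac14$. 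Hence $\lambda(P_T^0\operatorname{div}_h\xi_h,\operatorname{div}_h e_h)_{\mathcal T_h}$ survives and must be estimated.

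The paper writes $J(\xi_h,e_h)=(P_T^0\epsilon_h(\xi_h),\sigma(e_h))_{\mathcal T_h}-\sum_{F\in\mathcal F_h^i}\langle[P_T^0\sigma(\xi_h)],\{e_h\}\rangle_F$. It bounds the volume part by $\|\sigma(e_h)\|_{L^2(\mathcal T_h)}|\xi_h|_{H^1(\mathcal T_h)}=\|\sigma(u)-\Pi_{cr}\sigma(u)\|_{L^2(\mathcal T_h)}|\xi_h|_{H^1(\mathcal T_h)}$, which is a term of $\eta_{3,h}(u)$. If you keep your $\mu$/$\lambda$ split, use the pointwise bound $(2\mu+d\lambda)|\operatorname{div}_h e_h|\le\sqrt d\,|\sigma(e_h)|$, which gives $\lambda\|\operatorname{div}_h e_h\|\le d^{-1/2}\|\sigma(e_h)\|$.

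Relatedly, the $\mu$-part bound you state (in terms of $\{e_h\}$ and $|e_h|_{H^1}$) comes from this same integration-by-parts identity. A trace inequality applied directly to $\llbracket e_h\rrbracket$ would instead leave $h_T^{-1}\|e_h\|_{0,T}$, which does not appear on the right-hand side of the statement. With these repairs your argument coincides with the paper's.
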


\begin{proof}
By Lemma \ref{lem:covercivity}, the Lax--Milgram theorem
	implies that there exists a unique $u_h\in V_M^0$ such that
	\[
		\mathcal A(u_h,v_h)=(f,v_h)_{\mathcal{T}_h}, 
		\qquad\forall v_h\in V_M^0.
	\]
	By the triangle inequality,
	\[
		\|u-u_h\|_{*,h}
		\leq
		\|u-\Pi_Mu\|_{*,h}
		+\|\Pi_Mu-u_h\|_{*,h}.
	\]
	Coercivity implies
	\begin{align}
		\|w_h\|_{*,h}^2
		&=
		\interleave w_h\interleave_{\iota,h}^2
		+s(w_h,w_h)
		\lesssim
		\mathcal A(w_h,w_h)
		\notag\\
		&=
		\mathcal A(\Pi_Mu-u,w_h)
		+\mathcal A(u,w_h)-(f,w_h)_{\mathcal{T}_h}. 
		\label{eq:discrete-error-identity}
	\end{align}
Here $w_h=\Pi_Mu-u_h$. Then we estimate 
	\[
		\mathcal A(e_h,w_h)
		=
		a_\iota(e_h,w_h)
		-J(e_h,w_h)
		+J(w_h,e_h)
		+s(e_h,w_h).
	\]
	Using the commuting properties
	\[
		\sigma(\Pi_Mu)=\Pi_{cr}\sigma(u),
		\qquad
		\nabla_h\sigma(\Pi_Mu)
		=P_T^0\nabla\sigma(u), 
	\]
and the fact that $\nabla_h\epsilon_h(w_h)$ is a constant tensor, we obtain
	\begin{align}
		|a_\iota(e_h,w_h)|
		\lesssim{}&
		\|\sigma(u)
		-\Pi_{cr}\sigma(u)\|_{L^2(\mathcal{T}_h)} 
		\interleave w_h\interleave_{\iota,h}.
		\label{eq:aiota-interpolation}
	\end{align} Inverse trace inequality gives
\begin{equation}
	|J(e_h,w_h)|\lesssim 	\|\sigma(u)-\Pi_{cr}\sigma(u)\|_{L^2(\mathcal{T}_h)} 
	|w_h|_{H^1(\mathcal{T}_h)}. 
 	\label{eq:J-rho-w}   
\end{equation}
	Next we turn our attention to the term $J(w_h,e_h)$.
	Elementwise integration by parts gives
	\begin{align}
		J(w_h,e_h)
		={}&
		(P_T^0\epsilon_h(w_h), 
		 \sigma(e_h))_{\mathcal T_h}-
		\sum_{F\in\mathcal F_h^i}
		\left\langle
		[P_T^0\sigma(w_h)], 
		\{e_h\}
		\right\rangle_F .
		\label{eq:J-w-rho-identity}
	\end{align}
	The volume term satisfies
	\begin{align*}
		\left|
		(P_T^0\epsilon_h(w_h), 
		 \sigma(e_h))_{\mathcal T_h}
		\right|
		&\lesssim
		\|\sigma(e_h)\|_{L^2(\mathcal{T}_h)} 
		|w_h|_{H^1(\mathcal{T}_h)}\leq 
		\|\sigma(u)-\Pi_{cr}\sigma(u)\|_{L^2(\mathcal{T}_h)} 
		\|w_h\|_{*,h}.
	\end{align*}
	Then we only need to estimate the second term.
	Using
	\[
		[P_T^0\sigma(w_h)] 
		=
		2\mu
		[P_T^0\epsilon_h(w_h)] 
		+
		\lambda
		[P_T^0\operatorname{div}_h w_h] 
		, 
	\]
	the strain part can be bounded by
	\begin{align}
		&2\mu
	|
		\sum_{F\in\mathcal F_h^i}
		\left\langle
		[P_T^0\epsilon_h(w_h)], 
		\{e_h\}
		\right\rangle_F
		|
		\lesssim
				(
		\sum_{F\in\mathcal F_h^i}
		h_F^{-1}\|\{e_h\}\|_{0,F}^2
		)^{1/2}\|w_h\|_{*,h}.
		\label{eq:J-shear-part}
	\end{align}
 
	The divergence part satisfies
	\begin{align}
		&\lambda
		|
		\sum_{F\in\mathcal F_h^i}
		\left\langle
		[P_T^0\operatorname{div}_h w_h], 
		\{e_h\} 
		\right\rangle_F
		|
		\notag\\
		&\qquad\lesssim
		s(w_h,w_h)^{1/2}		(
		\sum_{F\in\mathcal F_h^i}
		h_F^{-1}\|\{e_h\}\|_{0,F}^2
		)^{1/2}
		\leq
				(
		\sum_{F\in\mathcal F_h^i}
		h_F^{-1}\|\{e_h\}\|_{0,F}^2
		)^{1/2}\|w_h\|_{*,h}.
		\label{eq:J-w-rho-estimate}
	\end{align}
	Similarly, the last term can be bounded by the interpolation error
	and $s(w_h,w_h)$.
	Indeed, the Cauchy--Schwarz inequality gives
	\begin{align}
		|s(e_h,w_h)|
		&\leq
		s(e_h,e_h)^{1/2}
		s(w_h,w_h)^{1/2}\leq
		s(u-\Pi_Mu,u-\Pi_Mu)^{1/2}
		\|w_h\|_{*,h}.
		\label{eq:s-interpolation-estimate}
	\end{align}
	Combining
	\eqref{eq:aiota-interpolation},
	\eqref{eq:J-rho-w},
	\eqref{eq:J-w-rho-estimate}, and
	\eqref{eq:s-interpolation-estimate}, we obtain
	\begin{align}
		|\mathcal A(e_h,w_h)|
		\lesssim
		\Bigl(
		\eta_{I,h}(u)
		+s(u-\Pi_Mu,u-\Pi_Mu)^{1/2}
		\Bigr)
		\|w_h\|_{*,h}.
		\label{eq:A-interpolation-estimate}
	\end{align}
	Since
	\[
		s(u-\Pi_Mu,u-\Pi_Mu)^{1/2}
		\leq
		\|u-\Pi_Mu\|_{*,h},
	\]
	\eqref{eq:discrete-error-identity} yields
	\begin{align*}
		\|w_h\|_{*,h}^2
		\lesssim{}&
		\Bigl(
		\eta_{3,h}(u)
		+\|u-\Pi_Mu\|_{*,h}
		\Bigr)
		\|w_h\|_{*,h}+
		\bigl|
		\mathcal A(u,w_h)-(f,w_h)_{\mathcal{T}_h} 
		\bigr|.
	\end{align*}
	Dividing by $\|w_h\|_{*,h}$, and then using the triangle
	inequality, proves \eqref{eq:second-strang}.
\end{proof}

\begin{myTheo}[Parameter-robust a priori error estimate]
	\label{thm:robust-error}
	Let $2\leq s\leq3$. Let
	$u\in H_0^2(\Omega;\mathbb R^d)\cap
	H^s(\Omega;\mathbb R^d)$ and $u_h\in V_M^0$ be the solutions of
	\eqref{eq:LSG-cweak} and \eqref{eq:LSG-weak}, respectively.
	Assume that the parameter-uniform regularity estimates
	\eqref{Reg-u0}--\eqref{Reg} hold. Then
\begin{equation}
	\|u-u_h\|_{*,h}
	\lesssim
	\min\left\{
	h^{1/2}\|f\|_0,(h(\|\sigma(u)\|_{1}+\|f\|_0)+\iota h^{s-2}\|\sigma(u)\|_{s-1})
	\right\},
\end{equation}
In particular, if $\iota\|\sigma(u)\|_2+\|\sigma(u)\|_1\lesssim \|f\|_0,$ then
\[
	\|u-u_h\|_{*,h}
	\lesssim h\|f\|_0.
\]
	The hidden constant is independent of $h$, $\iota$, and $\lambda$.
\end{myTheo}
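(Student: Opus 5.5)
The plan is to start from the second Strang lemma (Lemma~\ref{lem:second-strang}) and bound its three contributions separately, each in two forms: a robust one of size $h^{1/2}\|f\|_0$, and a regular one of size $h(\|\sigma(u)\|_1+\|f\|_0)+\iota h^{s-2}\|\sigma(u)\|_{s-1}$. The first contribution, $\|u-\Pi_Mu\|_{*,h}$, is handled directly by Lemma~\ref{lem:star-norm-interpolation}. The third, the consistency supremum, is handled by the Remark following Lemma~\ref{lem:consistency}, namely \eqref{eq:consistency-regular-bound} and \eqref{eq:consistency-robust-bound}. In both the regular bounds I will replace $h^{s-1}\|\sigma(u)\|_{s-1}$ by $h\|\sigma(u)\|_1$ only through the part that matters. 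More precisely, I will rewrite the $\iota$-independent pieces using $s-1\ge1$ and the $\lambda$-robust Crouzeix--Raviart estimate $\|\sigma(u)-\Pi_{cr}\sigma(u)\|_{L^2(\mathcal T_h)}\lesssim h|\sigma(u)|_1$. The $\iota$-weighted pieces keep the form $\iota h^{s-2}\|\sigma(u)\|_{s-1}$.

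That leaves $\eta_{3,h}(u)$. Its first term, $\|\sigma(u)-\Pi_{cr}\sigma(u)\|_{L^2(\mathcal T_h)}$, is exactly what Lemma~\ref{lem:interpolation} controls, giving $\min\{h^{1/2}\|f\|_0,\,h\|\sigma(u)\|_1\}$. The second term is $\bigl(\sum_F h_F^{-1}\|\{e_h\}\|_{0,F}^2\bigr)^{1/2}$ with $e_h=\Pi_Mu-u$. For it I apply the trace inequality elementwise,
\[
h_F^{-1}\|e_h\|_{0,F}^2\lesssim h_T^{-2}\|e_h\|_{0,T}^2+|e_h|_{1,T}^2 .
\]
I then use that $\Pi_M$ reproduces affine functions, or more precisely that $e_h$ has vanishing $(d-2)$-face means on $T$, so a Poincaré-type bound gives $\|e_h\|_{0,T}\lesssim h_T|e_h|_{1,T}$. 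Hence this term is bounded by $|u-\Pi_Mu|_{H^1(\mathcal T_h)}=\|\nabla u-\Pi_{cr}\nabla u\|_{L^2(\mathcal T_h)}$. This is at most $h|u|_2$, and robustly it is at most $\|u-\Pi_Mu\|_{*,h}\lesssim h^{1/2}\|f\|_0$. Since $\mu$ is bounded below, $|u|_2\lesssim\|\epsilon(u)\|_1$ by the $H^1$ Korn inequality on $H_0^1$ applied to $\nabla u$, and $\|\epsilon(u)\|_1\lesssim\|\sigma(u)\|_1$ uniformly in $\lambda$ by inverting $\sigma$ on its trace and deviatoric parts. So the regular bound is again $h\|\sigma(u)\|_1$.

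Collecting the pieces, each contribution is bounded by the minimum of the two expressions, and the sum of minima is bounded by the minimum of the sums up to constants, which gives the stated bound. For the ``in particular'' statement I take $s=3$, so $\iota h^{s-2}\|\sigma(u)\|_2=h\,\iota\|\sigma(u)\|_2\lesssim h\|f\|_0$, and together with $\|\sigma(u)\|_1\lesssim\|f\|_0$ this yields $\|u-u_h\|_{*,h}\lesssim h\|f\|_0$.

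The main obstacle is the $\lambda$-robustness of the regular bound for the $s(\cdot,\cdot)$-type quantities hidden inside $\|u-\Pi_Mu\|_{*,h}$ and $\eta_{2,h}$. These involve $\lambda[\,P_T^0\operatorname{div}u-\operatorname{div}_h\Pi_Mu\,]$ on faces. For them I will use the commuting property $\operatorname{div}_h\Pi_Mu=\Pi_{cr}\operatorname{div}u$ and the single-valuedness of $\operatorname{div}u\in H^1$ across faces. With these, the jump becomes a sum of local approximation errors of $\lambda\operatorname{div}u$ on $\omega_F$, which the trace inequality bounds by $h\,\lambda|\operatorname{div}u|_1\lesssim h\|\sigma(u)\|_1$ since $\lambda\operatorname{div}u=\tfrac1d\operatorname{tr}\sigma(u)-\tfrac{2\mu}{d}\operatorname{div}u$. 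I expect this step to need the most care, but it should go through with the Lemma~\ref{lem:interpolation} argument for the robust case.
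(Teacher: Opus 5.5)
Your overall route is exactly how the paper assembles the theorem:
\begin{itemize}
\item Lemma~\ref{lem:second-strang} as the starting point;
\item Lemma~\ref{lem:star-norm-interpolation} for $\|u-\Pi_Mu\|_{*,h}$;
\item the Remark after Lemma~\ref{lem:consistency} for the consistency term;
\item Lemma~\ref{lem:interpolation} for the first part of $\eta_{3,h}$;
\item $\min\{a_1,b_1\}+\min\{a_2,b_2\}\le\min\{a_1+a_2,b_1+b_2\}$ to combine the bounds.
\end{itemize}
Your treatment of the $\lambda$-weighted face terms and of $|u|_2\lesssim\|\sigma(u)\|_1$ is fine.

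\textbf{The gap.} It lies in the face part $\bigl(\sum_{F\in\mathcal F_h^i}h_F^{-1}\|\{e_h\}\|_{0,F}^2\bigr)^{1/2}$ of $\eta_{3,h}$. The paper leaves this term implicit, so you had to supply it, and your step ``$e_h$ has vanishing $(d-2)$-face means on $T$, hence $\|e_h\|_{0,T}\lesssim h_T|e_h|_{1,T}$'' is false. Vertex values ($d=2$) and edge means ($d=3$) are not bounded functionals on $H^1(T)$, because codimension-two sets have zero $H^1$-capacity. Using logarithmic cut-offs at the vertices or edges, plus thin correctors for the normal-derivative means, you can build $v\in H^2(T)$ with:
\begin{itemize}
\item all Morley degrees of freedom equal to zero;
\item $\|v\|_{0,F}\approx|F|^{1/2}$;
\item $|v|_{1,T}$ arbitrarily small.
\end{itemize}
Hence $h_F^{-1/2}\|e_h\|_{0,F}$ is not controlled by $|u-\Pi_Mu|_{H^1(\mathcal T_h)}$, let alone by $\|u-\Pi_Mu\|_{*,h}$.

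What is true is $\|e_h\|_{0,T}\lesssim h_T|e_h|_{1,T}+h_T^2|e_h|_{2,T}$. For the regular bound this is harmless, since you still get $h|u|_2$. For the robust bound, however, the extra term is only $\lesssim h|u|_2\lesssim h\iota^{-1/2}\|f\|_0$. That is not $O(h^{1/2}\|f\|_0)$ when $h>\iota$, which is precisely the boundary-layer regime. Splitting off $u_0$ does not rescue this element-level bound, because $|u-u_0|_2$ is still of size $\iota^{-1/2}\|f\|_0$.

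\textbf{The repair.} Apply Poincar\'e on the face instead of the element. On $F$, the relevant $(d-2)$-faces form $\partial F$ (the endpoints of an edge, or the edges of a triangle). There the constraint is $H^1(F)$-bounded, so
\[
h_F^{-1/2}\|e_h^\pm\|_{0,F}\lesssim h_F^{1/2}\|\nabla_h e_h^\pm\|_{0,F},\qquad \nabla_h e_h=(\Pi_{cr}-I)\nabla u .
\]
\begin{itemize}
\item For $h\le\iota$, the trace inequality gives $h|u|_2\lesssim h^{1/2}\|f\|_0$.
\item For $h>\iota$, split $u=u_0+w$. The $u_0$ part is $\lesssim h|u_0|_2\lesssim h\|f\|_0$, by \eqref{eq:regularity-elastic} and $|D^2u_0|\lesssim|\nabla\epsilon(u_0)|$.
\item For $w$, use the inverse trace inequality $h_F\|\Pi_{cr}\nabla w\|_{0,F}^2\lesssim\|\Pi_{cr}\nabla w\|_{0,T}^2$. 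Also use the multiplicative trace inequality $h_F\|\nabla w\|_{0,F}^2\lesssim\|\nabla w\|_{0,T}^2+h_T\|\nabla w\|_{0,T}|w|_{2,T}$.
\item With the argument of Lemma~\ref{lem:interpolation} and \eqref{Reg-u0}--\eqref{Reg}, both sums are $\lesssim(\iota+h)\|f\|_0^2\lesssim h\|f\|_0^2$.
\end{itemize}
The essential point is that the multiplicative trace inequality is applied to $\nabla w$. There $\|\nabla w\|_0\lesssim\iota^{1/2}\|f\|_0$ exactly compensates $|w|_2\lesssim\iota^{-1/2}\|f\|_0$.
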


\section{A superpenalty Morley formulation}
\label{sec:morley-superpenalty}

We also consider a symmetric Morley method obtained by adding a
superpenalty term in the second order elasticity term, following \cite{SPMWX}. This formulation does not involve projection operators. But its convergence rate and parameter robustness depend on the choice of a parameter $p$. Let $0<p\leq 1$, define
\begin{align}
j_{p,h}(w,v)
&:=\sum_{F\in\mathcal F_h}
  h_F^{-(2p+1)}
  \bigl\langle[\![w]\!],[\![v]\!]\bigr\rangle_F,
\label{sp:eq:penalty}\\
A_{\mathrm{sp},h}(w,v)
&:=a_\iota(w,v)+j_{p,h}(w,v),
\label{sp:eq:bilinear}
\end{align}
where
\[
a_\iota(w,v)
=\iota^2(\nabla_h\sigma_h(w),\nabla_h\varepsilon_h(v))_{\mathcal T_h}
 +(\sigma_h(w),\varepsilon_h(v))_{\mathcal T_h}.
\]
Here $[\![v]\!]=v^+\otimes n^++v^-\otimes n^-$ on an interior
face and $[\![v]\!]=v\otimes n$ on a boundary face. Thus the penalty
controls the full displacement jump, and the sum includes boundary faces.
The method is to find $u_h^{\mathrm{sp}}\in V_M^0$ such that
\begin{equation}
A_{\mathrm{sp},h}(u_h^{\mathrm{sp}},v_h)=(f,v_h)
\qquad\forall v_h\in V_M^0.
\label{sp:eq:method}
\end{equation}
Set
\begin{equation}
\|v\|_{\mathrm{sp},h}^2
:=a_\iota(v,v)+j_{p,h}(v,v).
\label{sp:eq:norm}
\end{equation}
The discrete Korn inequalities imply
\begin{equation}
|v_h|_{H^1(\mathcal{T}_h)}^2+\iota^2|v_h|_{H^2(\mathcal{T}_h)}^2
\lesssim\|v_h\|_{\mathrm{sp},h}^2
=A_{\mathrm{sp},h}(v_h,v_h)
\qquad\forall v_h\in V_M^0.
\label{sp:eq:coercivity}
\end{equation}
Then, we have
\begin{equation}
    \interleave u-u_h\interleave_{\iota,h}\lesssim \frac{|(f,w_h)-\mathcal{A}_{sp,h}(u,w_h)|+|\mathcal{A}_{sp,h}(u-\Pi_Mu,w_h)|}{\|w_h\|_{sp,h}} + \interleave u-\Pi_Mu \interleave_{\iota,h}.
    \label{eq:sp-error-decomp}
\end{equation}
Here $u$ is the solution of \eqref{eq:LSG-cweak} and $u_h$ is the finite element solution of \eqref{sp:eq:method}.
\begin{mylem}[Low-regularity consistency estimate]
\label{sp:low:consistency}
Let $u\in H_0^2(\Omega;\mathbb R^d)$ solve
\eqref{eq:LSG-cweak}, with $f\in L^2(\Omega;\mathbb R^d)$. Then, for every $w_h\in V_M^0$,
\begin{equation}
\begin{aligned}
&\bigl|(f,w_h)_{\mathcal T_h}
-\mathcal A_{\mathrm{sp},h}(u,w_h)\bigr|\\
&\quad\lesssim
\Bigl(\|\sigma(e_h)\|_{L^2(\mathcal T_h)}
+\iota\|\nabla_h\sigma(e_h)\|_{L^2(\mathcal T_h)}
+h\|f\|_0+h^p\|\sigma(u)\|_0\Bigr)
\|w_h\|_{\mathrm{sp},h}.
\end{aligned}
\label{sp:low:consistency-estimate}
\end{equation}
Here $e_h=\Pi_Mu-u$. The hidden constant is independent of $h$, $\iota$, and $\lambda$.
\end{mylem}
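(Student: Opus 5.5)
The plan is to proceed as in the proof of Lemma~\ref{lem:consistency}, using the enriching operator $I_A$ to move the exact equation onto a conforming test function. Set $z_h:=w_h-I_Aw_h$. Since $I_Aw_h\in H_0^2(\Omega;\mathbb R^d)$, \eqref{eq:LSG-cweak} gives $(f,I_Aw_h)=a_\iota(u,I_Aw_h)$. Moreover $u\in H_0^2$ has vanishing jumps on all faces, including boundary faces, so $j_{p,h}(u,w_h)=0$. Hence
\[
(f,w_h)_{\mathcal T_h}-\mathcal A_{\mathrm{sp},h}(u,w_h)=(f,z_h)_{\mathcal T_h}-a_\iota(u,z_h).
\]
By Lemma~\ref{Lem:Enrich-error} we have $|(f,z_h)|\lesssim h\|f\|_0|w_h|_{H^1(\mathcal T_h)}$. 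The same lemma gives $|z_h|_{H^1(\mathcal T_h)}\lesssim|w_h|_{H^1(\mathcal T_h)}$ and $|z_h|_{H^2(\mathcal T_h)}\lesssim|w_h|_{H^2(\mathcal T_h)}$. By \eqref{sp:eq:coercivity}, both norms of $w_h$ are controlled by $\|w_h\|_{\mathrm{sp},h}$, with the factor $\iota$ attached to the $H^2$ part.

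Next I insert $\Pi_Mu$ into both parts of $a_\iota(u,z_h)$:
\[
a_\iota(u,z_h)=-a_\iota(e_h,z_h)+a_\iota(\Pi_Mu,z_h).
\]
Cauchy--Schwarz and the bounds on $z_h$ give
\[
|a_\iota(e_h,z_h)|\lesssim\bigl(\|\sigma(e_h)\|_{L^2(\mathcal T_h)}+\iota\|\nabla_h\sigma(e_h)\|_{L^2(\mathcal T_h)}\bigr)\|w_h\|_{\mathrm{sp},h}.
\]

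For $a_\iota(\Pi_Mu,z_h)$ I use the commuting properties \eqref{eq:commuting}: $\sigma(\Pi_Mu)=\Pi_{cr}\sigma(u)$ is piecewise linear, and $\nabla_h\sigma(\Pi_Mu)=P_T^0\nabla\sigma(u)$ is piecewise constant.

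\emph{Fourth-order part.} Elementwise integration by parts has no volume term because $\nabla_h\sigma(\Pi_Mu)$ is piecewise constant. The resulting face terms are handled as term (II) in Lemma~\ref{lem:consistency}. Because $I_Aw_h$ is $C^1$, the jumps of $\epsilon_h(z_h)$ equal those of $\epsilon_h(w_h)$, and these have zero face mean by the weak continuity of $V_M^0$. So only the oscillation of $\{\nabla_h\sigma(\Pi_Mu)\}$ about its face mean enters, together with $[\nabla_h\sigma(\Pi_Mu)]$ tested against $\{\epsilon_h(z_h)\}$. Writing $\nabla_h\sigma(\Pi_Mu)=\nabla\sigma(u)+\nabla_h\sigma(e_h)$, using that $\nabla\sigma(u)$ has no jumps, and applying a discrete trace inequality and an inverse inequality on $z_h$, I expect these terms to be bounded by $\iota\|\nabla_h\sigma(e_h)\|\,\iota|w_h|_{H^2(\mathcal T_h)}$.

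\emph{Second-order part.} I integrate $(\Pi_{cr}\sigma(u),\epsilon_h(z_h))$ by parts elementwise, which produces three terms.

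The average term $\langle\{\Pi_{cr}\sigma(u)\},[\![z_h]\!]\rangle_F=\langle\{\Pi_{cr}\sigma(u)\},[\![w_h]\!]\rangle_F$ is where the superpenalty enters. Splitting with the weights $h_F^{p+1/2}$ and $h_F^{-(p+1/2)}$, and using a discrete trace inequality on the piecewise linear $\Pi_{cr}\sigma(u)$, gives
\[
h^p\bigl(\|\sigma(u)\|_0+\|\sigma(e_h)\|_{L^2(\mathcal T_h)}\bigr)j_{p,h}(w_h,w_h)^{1/2}.
\]
This is the source of the $h^p\|\sigma(u)\|_0$ contribution.

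The jump term $\langle[\Pi_{cr}\sigma(u)],\{z_h\}\rangle_F$ uses that the jump of a Crouzeix--Raviart function has zero mean on each face. I therefore subtract $P_F^0\{z_h\}$ and write $[\Pi_{cr}\sigma(u)]=[\sigma(e_h)]$, valid since $\sigma(u)\in H^1$ has no jump. This yields
\[
\|[\sigma(e_h)]\|_{0,F}\,h_F^{1/2}|z_h|_{1,\omega_F}.
\]

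The volume term $(\operatorname{div}_h\Pi_{cr}\sigma(u),z_h)$ involves $\operatorname{div}_h\sigma(\Pi_Mu)=P_T^0\operatorname{div}\sigma(u)$, which is again a component of $\nabla_h\sigma(\Pi_Mu)$. I plan to bound it together with the fourth-order terms via $\|z_h\|_{0}\lesssim h^2|w_h|_{H^2(\mathcal T_h)}$.

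The main obstacle is the jump term $\|[\sigma(e_h)]\|_{0,F}$, together with the volume term. A multiplicative trace inequality gives $h_F^{-1/2}\|\sigma(e_h)\|+h_F^{1/2}\|\nabla_h\sigma(e_h)\|$. The first piece closes with $|z_h|_{H^1(\mathcal T_h)}\lesssim|w_h|_{H^1(\mathcal T_h)}$. The second piece carries $h$ instead of the required $\iota$, so the bound cannot be closed naively. My first attempt would avoid integrating this piece by parts altogether. Instead I would use $|z_h|_{H^1(\mathcal T_h)}\lesssim h|w_h|_{H^2(\mathcal T_h)}$ and the representation $\nabla_h\sigma(e_h)=(P_T^0-I)\nabla\sigma(u)$, and combine it with the fourth-order face terms, where $\iota$ appears naturally. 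If that fails, the fallback is to keep $(\Pi_{cr}\sigma(u)-P_T^0\sigma(u),\epsilon_h(z_h))$ as a volume term and integrate only $P_T^0\sigma(u)$ by parts, as in term (I) of Lemma~\ref{lem:consistency}. Its jump term then pairs $[P_T^0\sigma(u)]$ with $\{z_h\}$, and the average term is again absorbed by $j_{p,h}$.
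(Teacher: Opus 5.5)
\textbf{There is a genuine gap: the three residual terms left after integrating $a_\iota(\Pi_Mu,z_h)$ by parts cannot be closed with trace and inverse inequalities alone.}

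Your reduction matches the paper's. This covers the identity $(f,w_h)_{\mathcal T_h}-\mathcal A_{\mathrm{sp},h}(u,w_h)=a_\iota(e_h,z_h)+(f,z_h)_{\mathcal T_h}-a_\iota(\Pi_Mu,z_h)$ and the bound on $a_\iota(e_h,z_h)$. It also covers absorbing $\langle\{\sigma(\Pi_Mu)\},\llbracket w_h\rrbracket\rangle_F$ into $j_{p,h}$, which produces $h^p\|\sigma(u)\|_0$. The gap is in the remaining three terms, and it is structural.

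(i) In the $\iota^2$ jump term you cannot use that $\nabla\sigma(u)$ has no jumps. For $u\in H_0^2$, $\nabla\sigma(u)$ is only in $L^2$ and has no face trace. A trace inequality for the non-polynomial $\nabla_h\sigma(e_h)$ would need $|u|_3$. A discrete trace inequality on the piecewise constant $[\nabla_h\sigma(\Pi_Mu)]$ only gives $h_F^{-1/2}\|\nabla\sigma(u)-P^0_{\omega_F}\nabla\sigma(u)\|_{0,\omega_F}$. That is a patch oscillation, not controlled by the elementwise $\|\nabla_h\sigma(e_h)\|_{0,T}=\|(I-P_T^0)\nabla\sigma(u)\|_{0,T}$. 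Consistently, the paper's bound \eqref{sp:low:moment-bubble} also contains $\sigma(e_h)$ and $f$, so your expected bound $\iota\|\nabla_h\sigma(e_h)\|\,\iota|w_h|_{H^2(\mathcal T_h)}$ for this part alone is not available.

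(ii) For the traction jump you get $h\|\nabla_h\sigma(e_h)\|\,|w_h|_{H^1(\mathcal T_h)}$ or $(h/\iota)^2\,\iota\|\nabla_h\sigma(e_h)\|\,\iota|w_h|_{H^2(\mathcal T_h)}$. Neither is uniformly bounded by the right-hand side.

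(iii) Treating $\operatorname{div}_h\sigma(\Pi_Mu)=P_T^0\operatorname{div}\sigma(u)$ separately from $f$ gives $h^2\|\operatorname{div}\sigma(u)\|_0|w_h|_{H^2(\mathcal T_h)}$. This involves $\|\sigma(u)\|_1$, which is not on the right-hand side and under \eqref{Reg} only scales like $\iota^{-1/2}\|f\|_0$.

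Your fallback produces $\|\sigma(u)-P_T^0\sigma(u)\|$ and patch oscillations of $\sigma(u)$. That is an $\eta_{1,h}$-type bound as in Lemma~\ref{lem:consistency}, not the stated estimate.

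\textbf{The missing idea} is a second, local use of \eqref{eq:LSG-cweak} through bubble functions, as in a~posteriori efficiency estimates.

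Keep $f+\operatorname{div}_h\sigma(\Pi_Mu)$ together as one element residual $r_T$, with $f$ replaced by a polynomial projection. Test the equation with $v=b_Tr_T\in H_0^2(T)$. Since $\nabla_h\sigma(\Pi_Mu)$ is constant and $v$ and $\nabla v$ vanish on $\partial T$,
\[
(f+\operatorname{div}\sigma(\Pi_Mu),v)_T=\iota^2(\nabla\sigma(u)-\nabla\sigma(\Pi_Mu),\nabla\epsilon(v))_T+(\sigma(u)-\sigma(\Pi_Mu),\epsilon(v))_T,
\]
which gives \eqref{sp:low:cell-bubble}. Face bubbles supported in $\omega_F$ give \eqref{sp:low:moment-bubble} and \eqref{sp:low:traction-bubble}: first one vanishing on $F$ with prescribed normal derivative, then one with prescribed trace.

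Each residual is then bounded by a term $\iota^2 h^{-k}\|\nabla_h\sigma(e_h)\|$ plus $\sigma(e_h)$- and $f$-terms with matching powers of $h$. Pair every $\iota^2$ piece with the $H^2$-scaled bounds for $z_h$ and its traces from Lemma~\ref{Lem:Enrich-error}, e.g.\ $\|z_h\|_{0,T}\lesssim h_T^2|w_h|_{H^2}$ locally. Pair every other piece with the $H^1$-scaled bounds. This gives exactly $\iota\|\nabla_h\sigma(e_h)\|\,\iota|w_h|_{H^2(\mathcal T_h)}+(\|\sigma(e_h)\|+h\|f\|_0)|w_h|_{H^1(\mathcal T_h)}$.

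The factor $\iota$ in front of $\|\nabla_h\sigma(e_h)\|$ comes from the PDE, not from any norm inequality. That is why your plan, which uses the equation only once through $I_Aw_h$, cannot close.
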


\begin{proof}
Set $z_h=w_h-I_Aw_h$. Since $u\in H_0^2(\Omega;\mathbb R^d)$,
$j_{p,h}(u,w_h)=0$,
\begin{equation}
\begin{aligned}
&(f,w_h)_{\mathcal T_h}
-\mathcal A_{\mathrm{sp},h}(u,w_h)=a_\iota(e_h,z_h)
+(f,z_h)_{\mathcal T_h}-a_\iota(\Pi_Mu,z_h).
\end{aligned}
\label{sp:low:split}
\end{equation}
By Lemma \ref{Lem:Enrich-error}, we have
\begin{equation}
\begin{aligned}
|a_\iota(e_h,z_h)|
\lesssim{}&\Bigl(\|\sigma(e_h)\|_{L^2(\mathcal T_h)}
+\iota\|\nabla_h\sigma(e_h)\|_{L^2(\mathcal T_h)}\Bigr)
\|w_h\|_{\mathrm{sp},h}.
\end{aligned}
\label{sp:low:interpolation-pairing}
\end{equation}
Since $\sigma(\Pi_Mu)$ is affine on each element,
\begin{equation}
\begin{aligned}
a_\iota(\Pi_Mu,z_h)
={}&-(\operatorname{div}_h\sigma(\Pi_Mu),z_h)_{\mathcal T_h}+\sum_{T\in\mathcal T_h}
\bigl\langle\sigma(\Pi_Mu)n_T,z_h\bigr\rangle_{\partial T}\\
&+\iota^2\sum_{T\in\mathcal T_h}
\bigl\langle(\nabla_h\sigma(\Pi_Mu))n_T,
\epsilon_h(z_h)\bigr\rangle_{\partial T}.
\end{aligned}
\label{sp:low:polynomial-ibp}
\end{equation}
By the weak continuity property of the Morley element, we have 
\begin{equation}
\begin{aligned}
&(f,z_h)_{\mathcal T_h}-a_\iota(\Pi_Mu,z_h)=\sum_{T\in\mathcal T_h}
(f+\operatorname{div}_h\sigma(\Pi_Mu),z_h)_{0,T}\\
&\qquad-\sum_{F\in\mathcal F_h^i}
\bigl\langle[\sigma(\Pi_Mu)],\{z_h\}\bigr\rangle_F-\iota^2\sum_{F\in\mathcal F_h^i}
\bigl\langle[\nabla_h\sigma(\Pi_Mu)],
\{\nabla z_h\}\bigr\rangle_F\\
&\qquad-\sum_{F\in\mathcal F_h}
\bigl\langle\{\sigma(\Pi_Mu)\},
\llbracket w_h\rrbracket\bigr\rangle_F.
\end{aligned}
\label{sp:low:residual-ibp}
\end{equation}
Following the standard bubble
construction used in \cite{SPMWX}, we obtain
\begingroup
\small
\begin{align}
\|f+\operatorname{div}_h\sigma(\Pi_Mu)\|_{0,T}
&\lesssim
\iota^2h_T^{-2}\|\nabla_h\sigma(e_h)\|_{0,T}
+h_T^{-1}\|\sigma(e_h)\|_{0,T}
+\|f\|_{0,T},
\label{sp:low:cell-bubble}\\
\iota^2\|[\nabla_h\sigma(\Pi_Mu)]n_F\|_{0,F}
&\lesssim
\sum_{T\subset\omega_F}\!
\bigl(
\iota^2h_T^{-1/2}\|\nabla_h\sigma(e_h)\|_{0,T}
+h_T^{1/2}\|\sigma(e_h)\|_{0,T}
+h_T^{3/2}\|f\|_{0,T}
\bigr),
\label{sp:low:moment-bubble}\\
\|[\sigma(\Pi_Mu)]\|_{0,F}
&\lesssim
\sum_{T\subset\omega_F}\!
\bigl(
\iota^2h_T^{-3/2}\|\nabla_h\sigma(e_h)\|_{0,T}
+h_T^{-1/2}\|\sigma(e_h)\|_{0,T}
+h_T^{1/2}\|f\|_{0,T}
\bigr).
\label{sp:low:traction-bubble}
\end{align}
\endgroup
For the last term, the polynomial inverse trace inequality gives
\begin{align}
&\left|\sum_{F\in\mathcal F_h}
\bigl\langle\{\sigma(\Pi_Mu)\},
\llbracket w_h\rrbracket\bigr\rangle_F\right|
\notag\leq
\left(\sum_{F\in\mathcal F_h}h_F^{2p+1}
\|\{\sigma(\Pi_Mu)\}\|_{0,F}^2\right)^{1/2}
j_{p,h}(w_h,w_h)^{1/2}
\notag\\
&\quad\lesssim
h^p\|\sigma(\Pi_Mu)\|_{L^2(\mathcal T_h)}
\|w_h\|_{\mathrm{sp},h}
\notag\leq
h^p\Bigl(\|\sigma(u)\|_0
+\|\sigma(e_h)\|_{L^2(\mathcal T_h)}\Bigr)
\|w_h\|_{\mathrm{sp},h}.
\label{sp:low:average-traction}
\end{align}
Since $h^p\leq1$, combining these bounds with
\eqref{sp:low:split} and \eqref{sp:low:interpolation-pairing}
proves \eqref{sp:low:consistency-estimate}.
\end{proof}
From Lemma \ref{sp:low:consistency}, the choice of $p$ is important for the convergence rate of the consistency term.
Increasing $p$ will improve the convergence rate of the consistency term. However the term 
$|\mathcal{A}_{sp,h}(e_h,w_h)|$ require taking a balance between the regularity of $u$ and $p$.

By the commuting property \eqref{eq:commuting} and the fact that
$\nabla_h\epsilon_h(w_h)$ is elementwise constant, we have
\[
(\nabla_h\sigma(e_h),\nabla_h\epsilon_h(w_h))_{\mathcal T_h}=0.
\]
Since $\llbracket u\rrbracket=0$, it follows that
\[
\mathcal A_{\mathrm{sp},h}(e_h,w_h)
=
(\sigma(e_h),\epsilon_h(w_h))_{\mathcal T_h}
+j_{p,h}(\Pi_Mu,w_h).
\]
The scaled trace and Poincaré inequalities imply
\[
\|\llbracket\Pi_Mu\rrbracket\|_{0,F}
\lesssim
\sum_{T\subset\omega_F}h_T^{1/2}|e_h|_{1,T}.
\]
Consequently, the Cauchy--Schwarz inequality yields
\[
|j_{p,h}(\Pi_Mu,w_h)|
\lesssim
\left(
\sum_{T\in\mathcal T_h}h_T^{-2p}|e_h|_{1,T}^{2}
\right)^{1/2}
j_{p,h}(w_h,w_h)^{1/2}.
\]
Combining these estimates, we obtain
\begin{equation}
|\mathcal A_{\mathrm{sp},h}(e_h,w_h)|
\lesssim
\left(
\|\sigma(e_h)\|_{L^2(\mathcal T_h)}
+
\left(
\sum_{T\in\mathcal T_h}h_T^{-2p}|e_h|_{1,T}^{2}
\right)^{1/2}
\right)
\|w_h\|_{\mathrm{sp},h}.
\label{sp:interpolation-term}
\end{equation}
If, in addition, $u\in H^s(\Omega;\mathbb R^d)$ with
$2\leq s\leq3$, the interpolation estimates imply
\begin{equation}
|\mathcal A_{\mathrm{sp},h}(e_h,w_h)|
\lesssim
\bigl(
h^{s-1}\|\sigma(u)\|_{s-1}
+h^{s-p-1}|u|_s
\bigr)
\|w_h\|_{\mathrm{sp},h}.
\label{sp:interpolation-term-rate}
\end{equation}
Combining Lemma \ref{sp:low:consistency}, \eqref{eq:sp-error-decomp} and \eqref{sp:interpolation-term-rate} we obtain the following error estimate theorem.  
\begin{myTheo}[Error estimate for the superpenalty method]
\label{sp:error-Hs}
Let $0<p\leq1$ and $h\leq1$. Suppose that
$u\in H_0^2(\Omega;\mathbb R^d)\cap H^s(\Omega;\mathbb R^d)$,
with $2\leq s\leq3$, solves \eqref{eq:LSG-cweak}, and let
$u_h\in V_M^0$ be the solution of \eqref{sp:eq:method}.
Then
\begin{equation}
\begin{aligned}
|u-u^{sp}_h|_\iota
\lesssim{}&
\bigl(h^{s-1}+\iota h^{s-2}\bigr)
\|\sigma(u)\|_{s-1}
+h^{s-p-1}|u|_s\\
&+h\|f\|_0+h^p\|\sigma(u)\|_0.
\end{aligned}
\label{sp:error-Hs-estimate}
\end{equation}
The hidden constant is independent of $h$, $\iota$, and $\lambda$.
\end{myTheo}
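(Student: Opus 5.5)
The plan is to assemble the three ingredients already prepared: the error decomposition \eqref{eq:sp-error-decomp}, the consistency bound of Lemma~\ref{sp:low:consistency}, and the interpolation-term bound \eqref{sp:interpolation-term-rate}. Set $e_h=\Pi_Mu-u$. First, coercivity \eqref{sp:eq:coercivity} controls $\interleave \Pi_Mu-u_h^{sp}\interleave_{\iota,h}$ by $\|\Pi_Mu-u_h^{sp}\|_{\mathrm{sp},h}$. Testing with $w_h=\Pi_Mu-u_h^{sp}$ and using \eqref{sp:eq:method}, we get
\[
\|w_h\|_{\mathrm{sp},h}^2=A_{\mathrm{sp},h}(\Pi_Mu-u,w_h)+A_{\mathrm{sp},h}(u,w_h)-(f,w_h).
\]
Dividing by $\|w_h\|_{\mathrm{sp},h}$ and applying the triangle inequality yields \eqref{eq:sp-error-decomp}. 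It therefore suffices to bound the three quantities on its right-hand side.

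For the consistency quotient, Lemma~\ref{sp:low:consistency} gives the bound $\|\sigma(e_h)\|_{L^2(\mathcal T_h)}+\iota\|\nabla_h\sigma(e_h)\|_{L^2(\mathcal T_h)}+h\|f\|_0+h^p\|\sigma(u)\|_0$. The first two terms are handled through the commuting property \eqref{eq:commuting}. We have $\sigma(e_h)=\Pi_{cr}\sigma(u)-\sigma(u)$, so Lemma~\ref{lem:interpolation1} applied with regularity index $s-1\ge1$ (where the Crouzeix--Raviart interpolant is well defined) gives $\|\sigma(e_h)\|\lesssim h^{s-1}\|\sigma(u)\|_{s-1}$. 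Likewise $\nabla_h\sigma(e_h)=(P_T^0-I)\nabla\sigma(u)$, so $\iota\|\nabla_h\sigma(e_h)\|\lesssim\iota h^{s-2}\|\sigma(u)\|_{s-1}$, exactly as in Lemma~\ref{lem:interpolation}. Both constants are $\lambda$-independent, because the $\lambda\operatorname{div}$ part is absorbed inside $\sigma(u)$ rather than being split off.

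For the interpolation term, I would use \eqref{sp:interpolation-term-rate} directly. This estimate uses the orthogonality $(\nabla_h\sigma(e_h),\nabla_h\epsilon_h(w_h))_{\mathcal T_h}=0$ together with the bound $|e_h|_{1,T}\lesssim h_T^{s-1}|u|_{s,T}$. It gives the contribution $h^{s-1}\|\sigma(u)\|_{s-1}+h^{s-p-1}|u|_s$. Finally, $\interleave u-\Pi_Mu\interleave_{\iota,h}\lesssim h^{s-1}|u|_s+\iota h^{s-2}|u|_s$ by Lemma~\ref{lem:interpolation1}. Since $p\le1$ and $h\le1$, we have $h^{s-1}\le h^{s-p-1}$, so the first piece is absorbed into $h^{s-p-1}|u|_s$.

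The step I expect to be delicate is this last $\iota$-weighted piece, $\iota h^{s-2}|u|_s$. The theorem only lists $\iota h^{s-2}\|\sigma(u)\|_{s-1}$, so I will need $\iota|u-\Pi_Mu|_{H^2(\mathcal T_h)}\lesssim\iota h^{s-2}\|\sigma(u)\|_{s-1}$. To obtain it, I would use $D_h^2(u-\Pi_Mu)=(I-P_T^0)D^2u$ together with the pointwise $H^2$-Korn inequality \eqref{eq:H2-korn} applied to $u-\Pi_Mu$ elementwise. This controls $|D^2(u-\Pi_Mu)|$ by $|\nabla(\epsilon(u)-P_T^0\epsilon(u))|$, and the latter is bounded by $\mu^{-1}$ times the analogous quantity for $\sigma(u)$ after removing the trace part; the constant depends only on $\mu$, which is allowed. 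If this route fails, I would instead keep $\iota h^{s-2}|u|_s$ and note that it is bounded by the listed terms up to a $\mu$-dependent constant. Summing all contributions gives \eqref{sp:error-Hs-estimate}.
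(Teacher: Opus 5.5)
Your proposal is correct and follows the paper's route. Like the paper, you combine \eqref{eq:sp-error-decomp}, Lemma~\ref{sp:low:consistency} (with $\sigma(e_h)=\Pi_{cr}\sigma(u)-\sigma(u)$ and $\nabla_h\sigma(e_h)=(P_T^0-I)\nabla\sigma(u)$ from the commuting property) and \eqref{sp:interpolation-term-rate}; beyond the paper's one-line ``combining'', you also bound $\interleave u-\Pi_Mu\interleave_{\iota,h}$ explicitly, and your treatment of its $\iota$-weighted $H^2$ part via $D_h^2(u-\Pi_Mu)=(I-P_T^0)D^2u$, the pointwise $H^2$-Korn inequality and the $\lambda$-uniform bound $|\epsilon|\le(2\mu)^{-1}|\sigma|$ is sound (one small slip: the absorption $h^{s-1}\le h^{s-p-1}$ relies on $p>0$ and $h\le1$, not on $p\le1$).
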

Theorem~\ref{sp:error-Hs} exhibits a balance between the factors
$\iota h^{s-2}$, $h^{s-p-1}$, and $h^p$, together with the
corresponding regularity norms of the exact solution.
For a prescribed regularity index $2<s\leq3$, the last two
factors can be balanced by choosing $p=(s-1)/2$.
In particular, for $p=1$, the penalty term becomes
\[
j_{1,h}(w,v)
=\sum_{F\in\mathcal F_h}h_F^{-3}
\bigl\langle\llbracket w\rrbracket,
\llbracket v\rrbracket\bigr\rangle_F.
\]
If $u\in H_0^2(\Omega;\mathbb R^d)\cap H^3(\Omega;\mathbb R^d)$,
the interpolation and consistency estimates give
\begin{equation}
\interleave u-u^{sp}_h\interleave_{\iota,h}
\lesssim
h\bigl(
\|f\|_0+\|\sigma(u)\|_1
+\iota\|\sigma(u)\|_2+|u|_3
\bigr).
\label{sp:eq:first-order}
\end{equation}
Under the assumed regularity estimates, this bound is uniform
in $\lambda$ for fixed $\iota$. However, the solution norms
need not remain bounded as $\iota\to0$.
Thus, \eqref{sp:eq:first-order} does not by itself provide
a convergence rate uniform in $\iota$.

Under the parameter-uniform regularity estimates
\eqref{Reg-u0}--\eqref{Reg} and \eqref{eq:regularity-elastic},
the decomposition $u=(u-u_0)+u_0$ and a multiplicative Morley
interpolation estimate yield, from the low-regularity error bound,
\[
\interleave u-u^{sp}_h\interleave_{\iota,h}
\lesssim
\bigl(h^p+h^{1/2-p}\bigr)\|f\|_0,
\qquad 0<p<\tfrac12.
\]
Choosing $p=1/4$ therefore gives a quarter-order convergence
rate uniform in both $\iota$ and $\lambda$.

The superpenalty scheme is symmetric and does not involve
projection operators in its formulation. Its choice of $p$
should account for the available regularity and the desired
parameter robustness. In the presence of boundary layers,
the best uniform rate obtained by the above analysis is
one quarter; a uniform half order rate is not established.
\section{Numerical Experiments}
In this section, we present several numerical experiments to
assess the robustness of the scheme \eqref{eq:LSG-weak} with
respect to the parameters $\iota$ and $\lambda$.
The first experiment uses a sufficiently smooth exact solution
to verify the expected convergence rates and validate the
implementation. The second experiment tests the convergence
of the scheme for a solution exhibiting boundary layers. In all numerical experiments in this section, the convergence rates are computed from the errors on the two finest meshes.

\subsection{Verification}
In this section we select the smooth enough exact solution as 
\[
\text{2D: } u_{\iota}=\mathrm{curl}(\sin^3\pi x_1\sin^3\pi x_2),\quad\text{3D: }u_{\iota}=\mathrm{curl}(\Pi_{i=1}^3\sin^3x_i,\Pi_{i=1}^3\sin^3x_i,\Pi_{i=1}^3\sin^3x_i)^T,
\]
on the domain $\Omega = (0,1)^d,~d=2,3.$
\begin{figure}[htbp]
  \centering
  \includegraphics[width=0.35\linewidth]{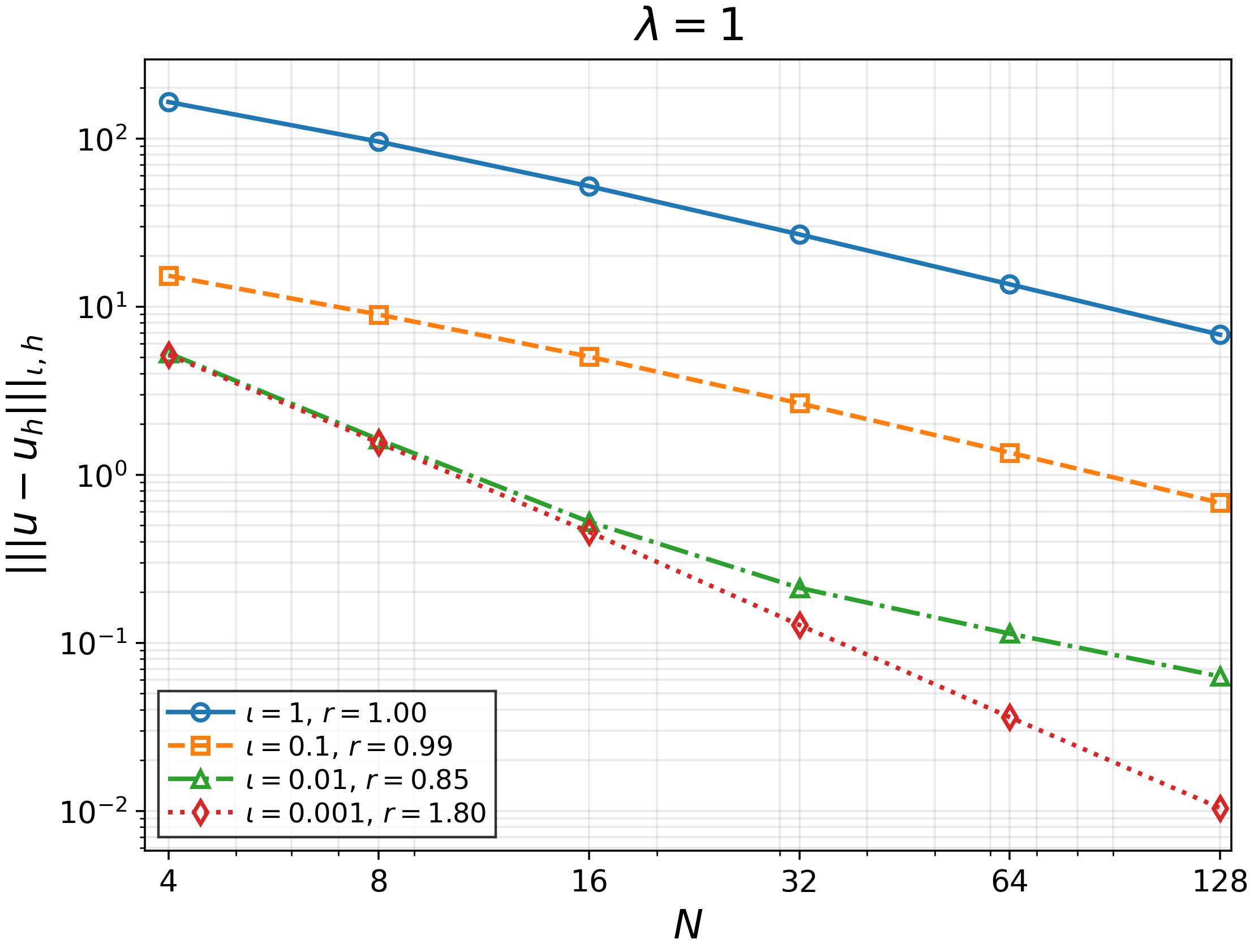}\qquad
    \includegraphics[width=0.35\linewidth]{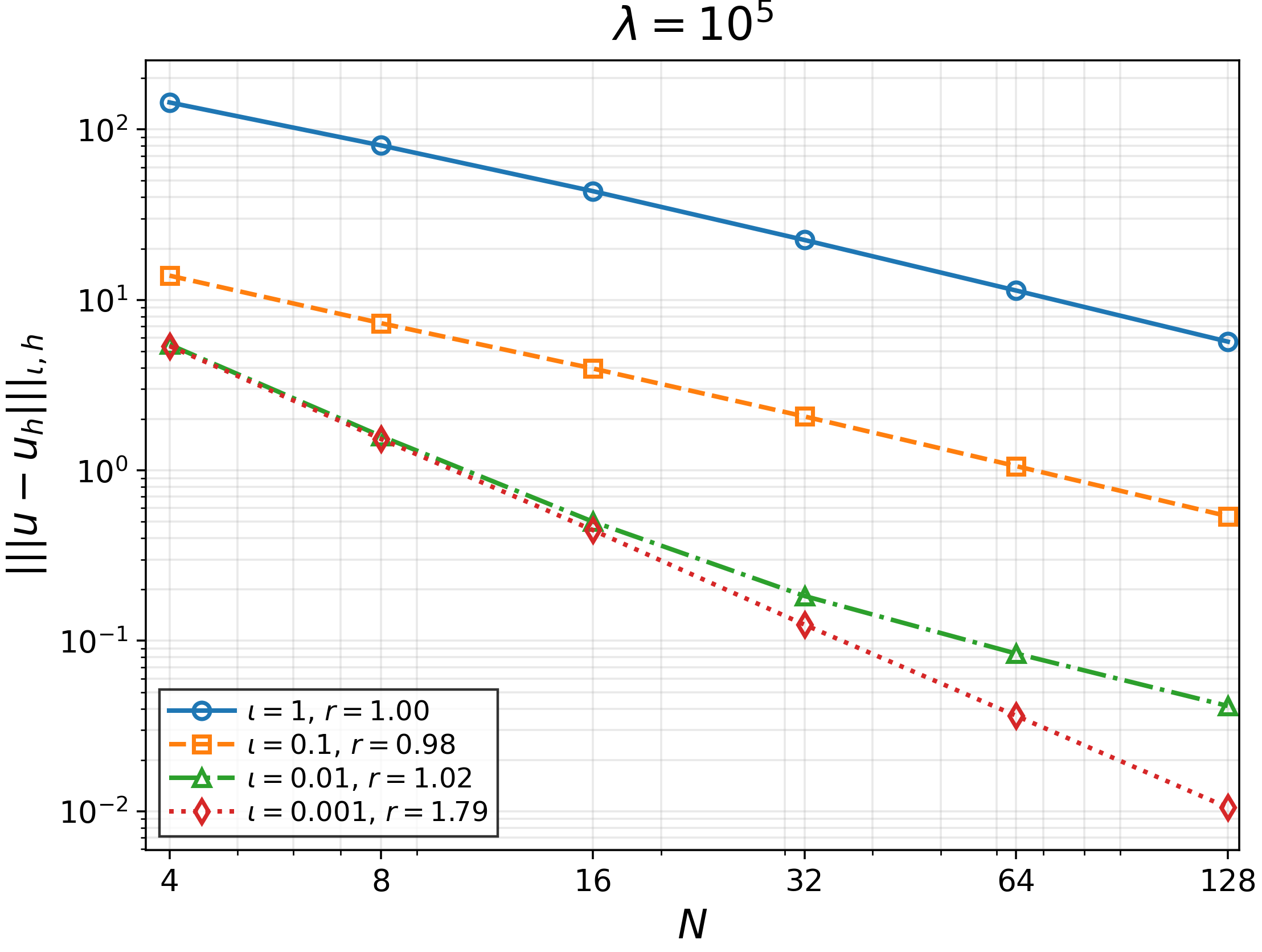}
  \caption{ Example 1 in 2D: convergence rate in energy norm $\interleave\cdot\interleave$.}
  \label{fig:smooth}
\end{figure}
\begin{figure}[htbp]
  \centering
  \includegraphics[width=0.35\linewidth]{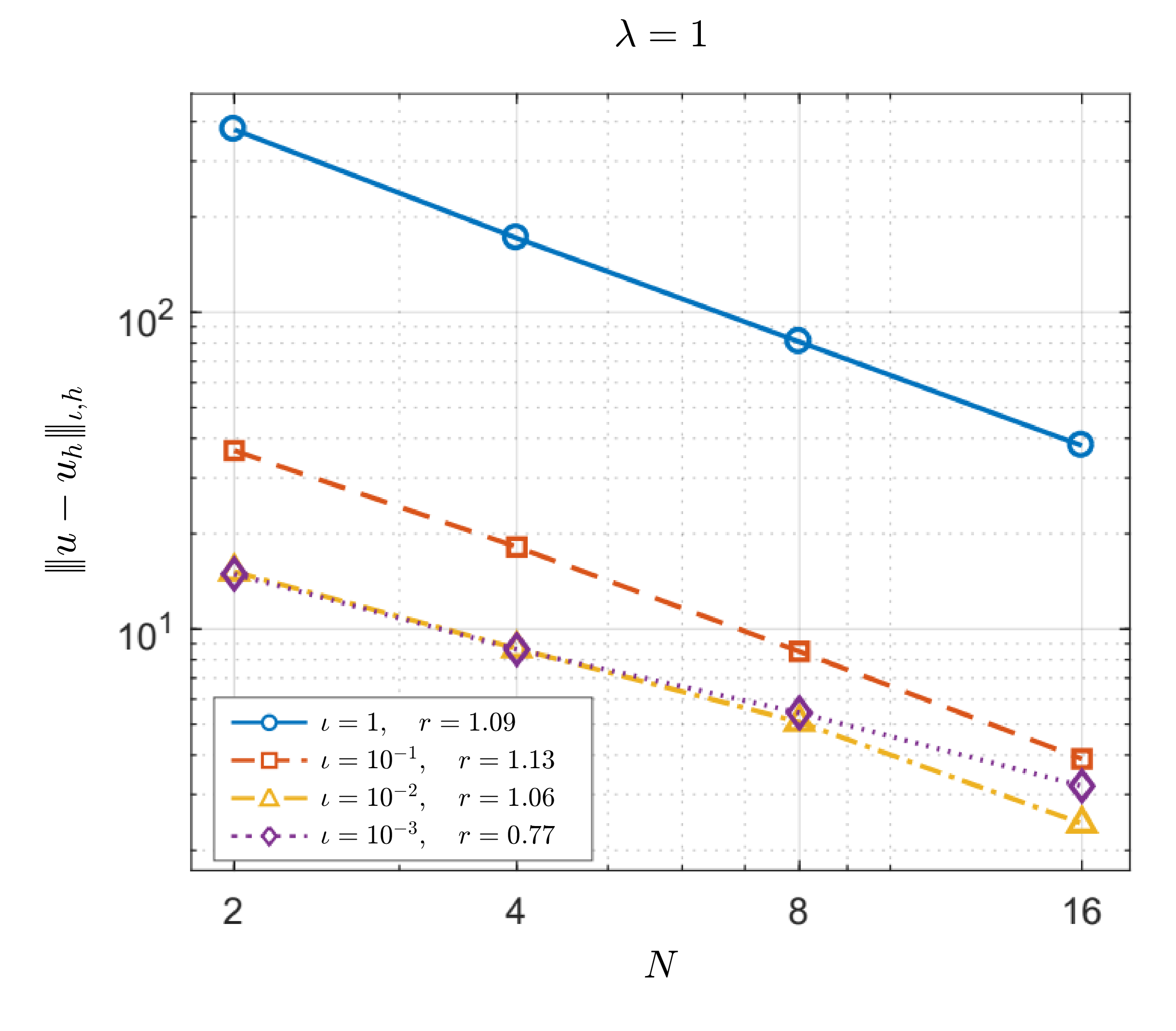}\quad
    \includegraphics[width=0.35\linewidth]{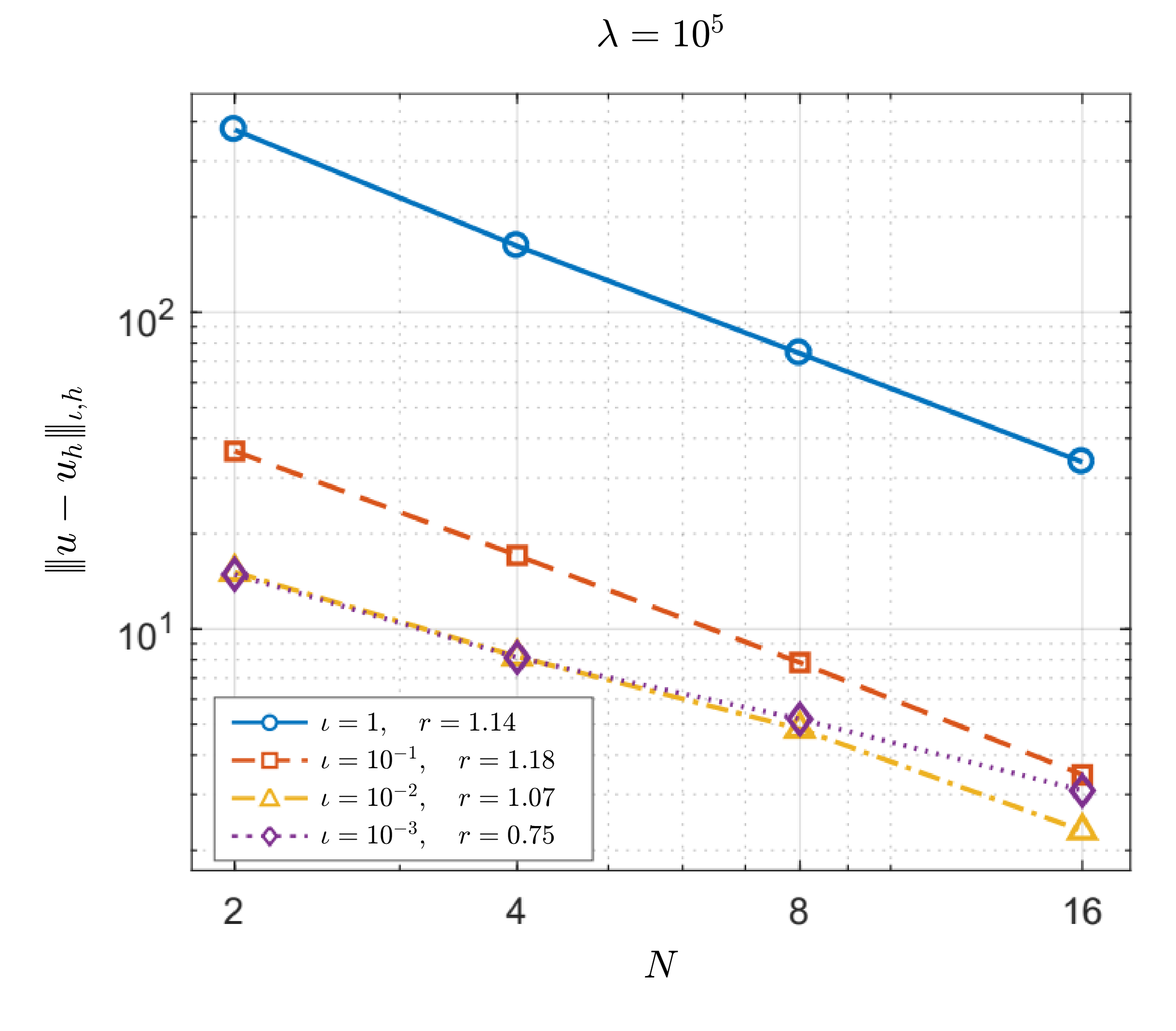}
  \caption{ Example 1 in 3D: convergence rate in energy norm $\interleave\cdot\interleave$.}
  \label{fig:smooth3D}
\end{figure}
We consider $\lambda=1$ and $10^5$, with
$\iota=1,10^{-1},10^{-2},10^{-3}$.
Figure~\ref{fig:smooth} and \ref{fig:smooth3D} presents the
energy-norm errors on uniformly refined meshes.
These results are almost consistent with the theoretical first order
error estimate for smooth solutions, which imply \eqref{eq:LSG-cweak} is robust with $\lambda$ and $\iota$. For $\iota=10^{-3}$,
a higher convergence rate is observed on the tested meshes. 
\subsection{Boundary layer test}
Let $\Omega=(0,1)^d,~d=2,3$ and $\mu=1$. In this subsection, we 
select the exact solution as
\[
u_\iota=
\begin{cases}
\operatorname{curl}\psi_\iota, & d=2,\\
\operatorname{curl}(\psi_\iota\boldsymbol{e}), & d=3,
\end{cases}
\qquad
\boldsymbol{e}=(1,1,1)^{\mathsf T},
\]
where $\pmb x=(x_1,\cdots,x_d),~d=2,3,$ and 
\[
\begin{aligned}
q_\iota(t)=\sin^2(\pi t)-c_\iota\Bigl[
\iota^2\bigl(
e^{-t/\iota}+e^{-(1-t)/\iota}-1-\eta_\iota
\bigr)
+\iota(1-\eta_\iota)t(1-t)
\Bigr],
\end{aligned}
\]
with
\[
\eta_\iota=e^{-1/\iota},
\quad
c_\iota=
\frac{2\pi^2}
{1+\eta_\iota-2\iota(1-\eta_\iota)}.
\]
Here $0<\iota\leq 1.$
Since $q_\iota=q_\iota'=q_\iota''=0$ at $t=0,1$,
we have $u_\iota\in H_0^2(\Omega;\mathbb{R}^d)$.
By directly computation the corresponding right hand side $\|f_{\iota}\|_0$ is bounded and $|u_\iota|_2\sim O(\iota^{-1/2})$. 
\begin{figure}[htbp]
  \centering
  \includegraphics[width=0.35\linewidth]{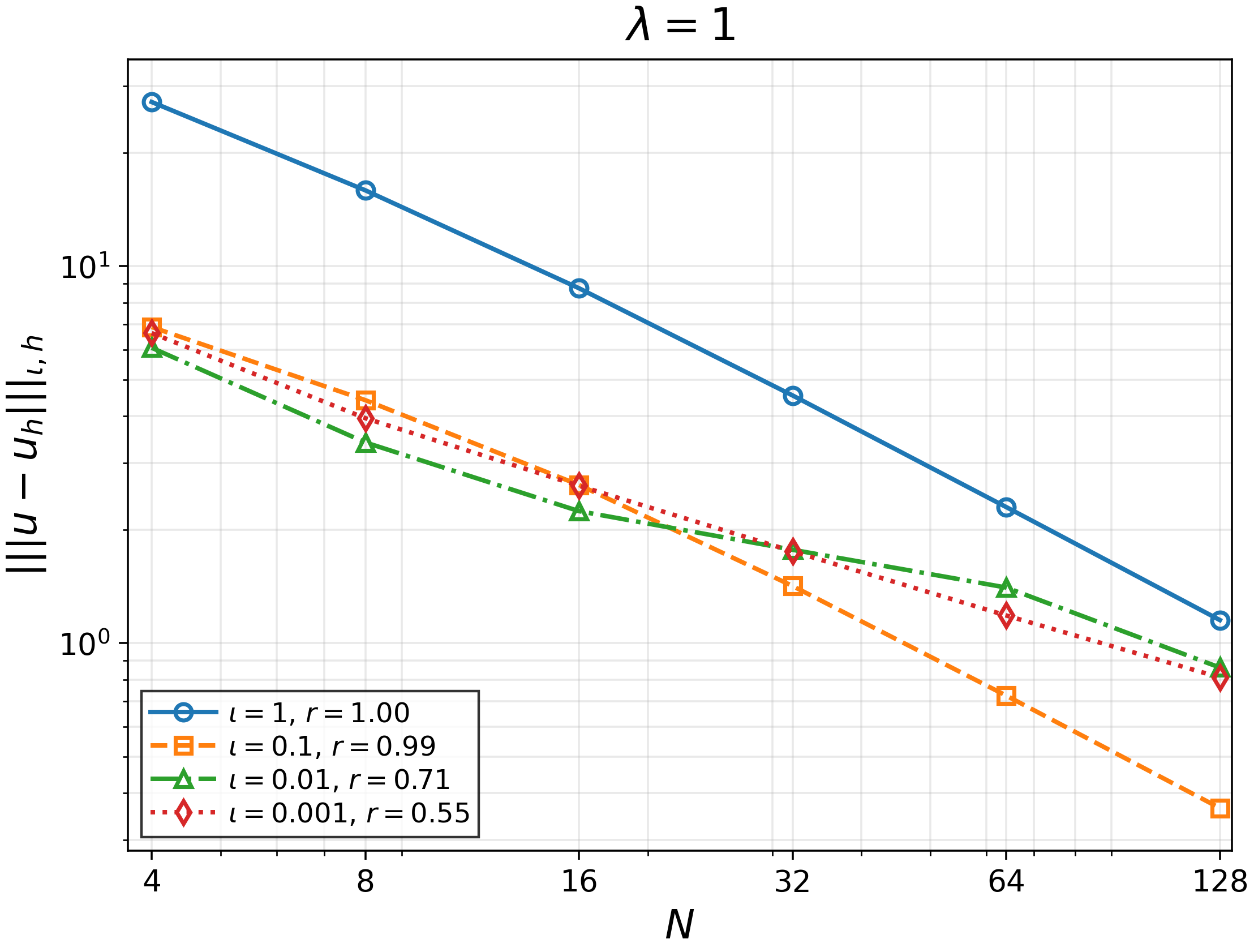}\qquad
    \includegraphics[width=0.35\linewidth]{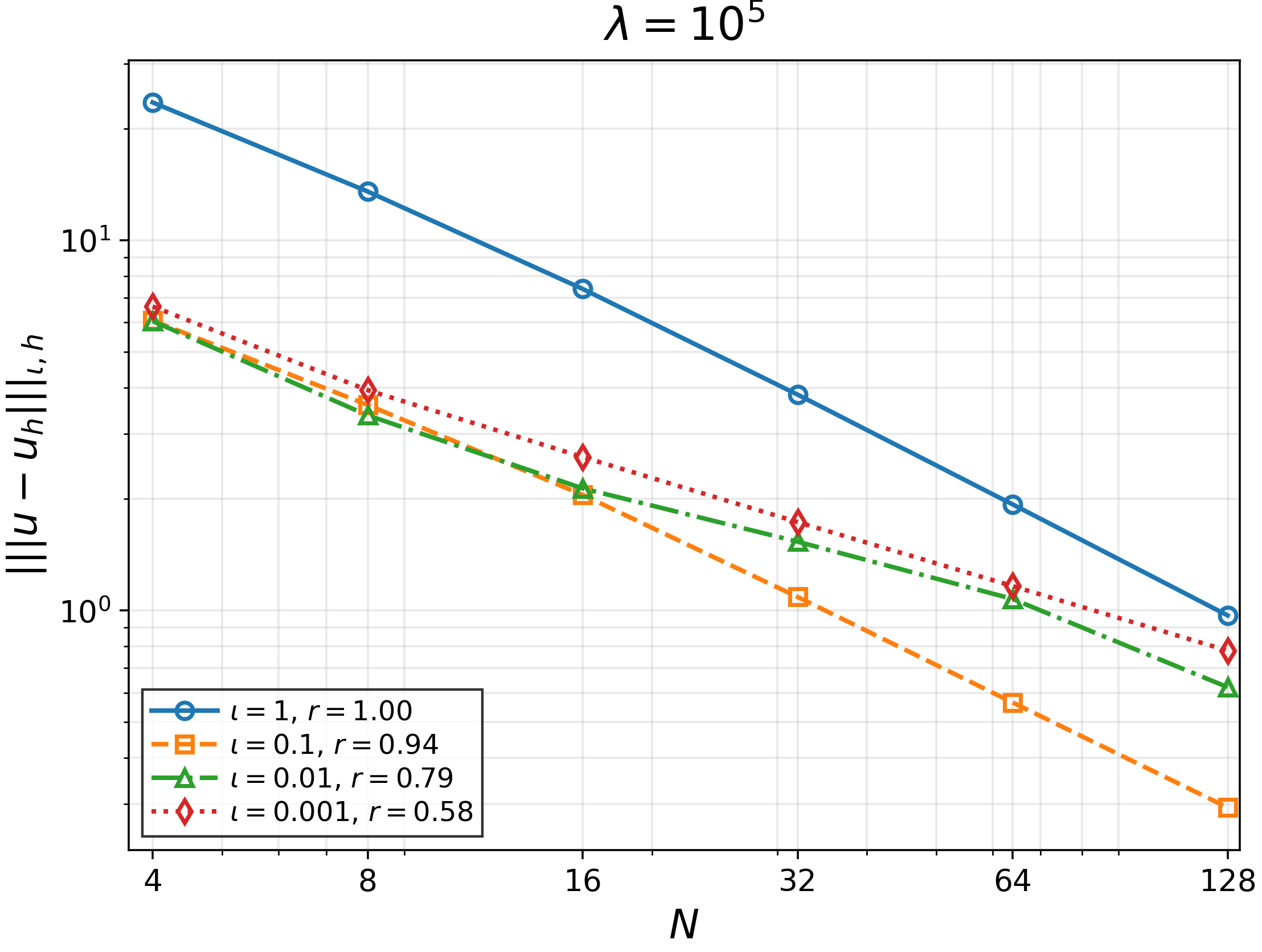}
  \caption{ Example 2 in 2D: convergence rate in energy norm $\interleave\cdot\interleave$.}
  \label{fig:layer}
\end{figure}

\begin{figure}
  \centering
  \includegraphics[width=0.35\linewidth]{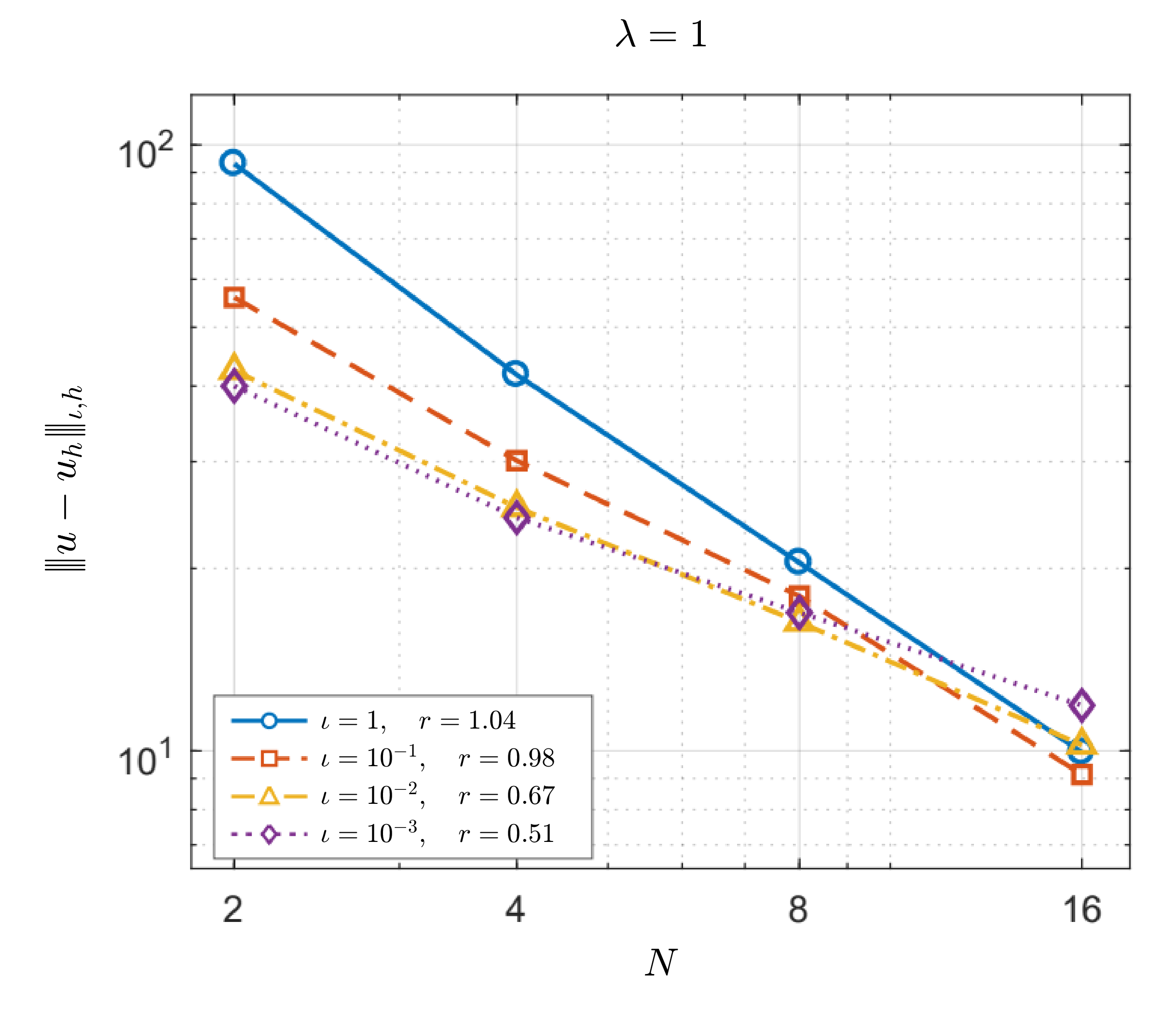}\quad
    \includegraphics[width=0.35\linewidth]{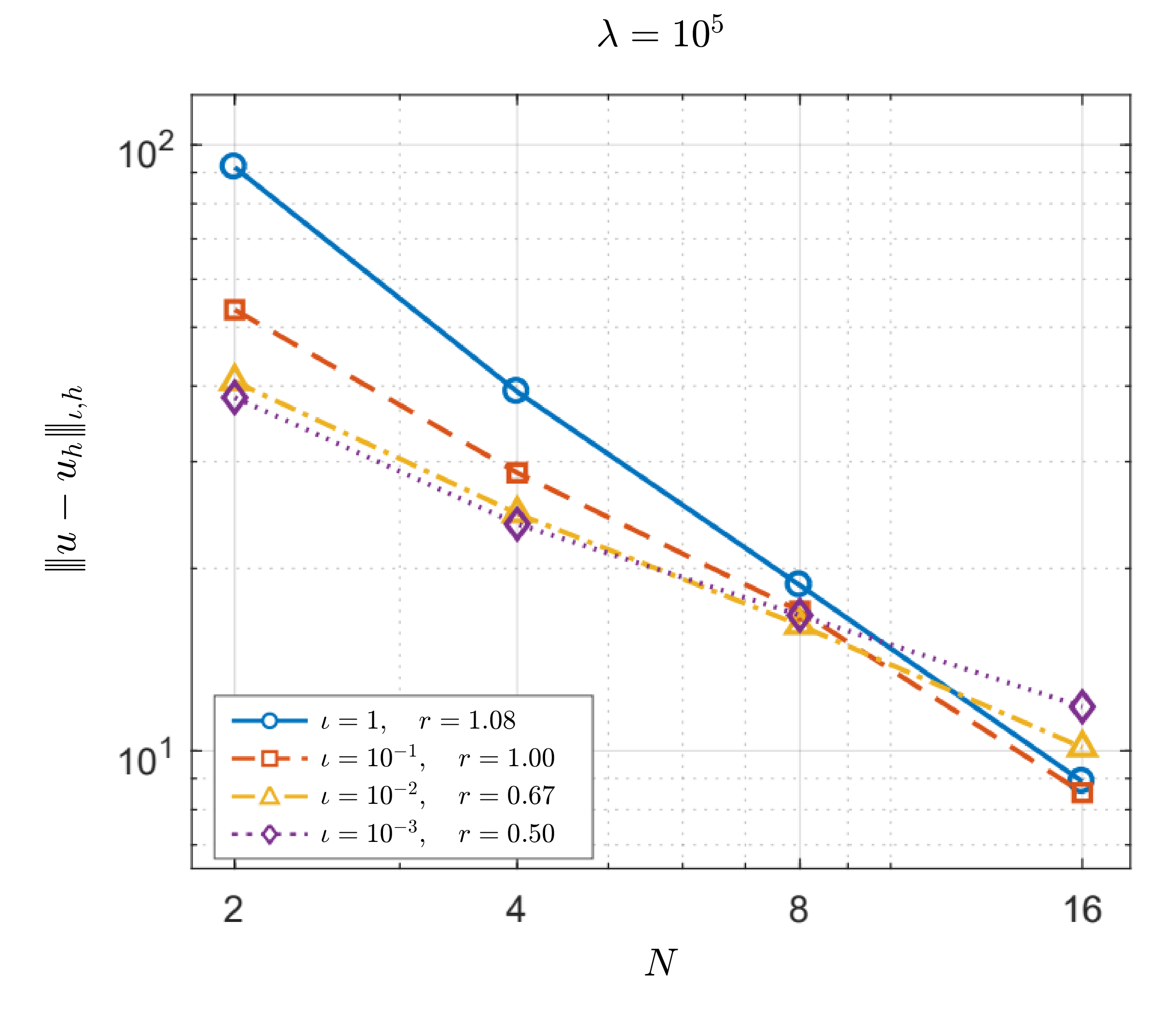}
  \caption{ Example 2 in 3D: convergence rate in energy norm $\interleave\cdot\interleave$.}
  \label{fig:layer3D}
\end{figure}

As in Example~1, we test $\iota=1,10^{-1},10^{-2},10^{-3}$
and $\lambda=1,10^5$ for the exact solution with boundary layer case.
Figure \ref{fig:layer} and \ref{fig:layer3D} show that as $\iota$ decreases, the observed convergence rate in the energy norm decreases from first order to approximately half order.

\bibliographystyle{plain}
\bibliography{ref}

\end{document}